 \numberwithin{equation}{section}
\newtheorem{theorem}{Theorem}[section]
\newtheorem*{theorem*}{Theorem}
\newtheorem*{conjecture*}{Conjecture}
\newtheorem{lemma}[theorem]{Lemma}
\newcommand{\is}{\hspace{2pt}}
\newcommand{\DD}{\mathbb{D}}
\newcommand{\eps}{\varepsilon}
\renewcommand{\epsilon}{\varepsilon}
\renewcommand{\phi}{\varphi}
\renewcommand{\kappa}{\varkappa}
\newcommand{\cC}{\mathcal C}
\newtheorem{corollary}[theorem]{Corollary}
\theoremstyle{definition}
\newtheorem{definition}[theorem]{Definition}
\newtheorem{question}[theorem]{Question}
\theoremstyle{remark}
\newtheorem{remark}[theorem]{Remark}
\begin{document}

\newcommand{\C}{\mathbb{C}}

\newcommand{\N}{\mathbb{N}}
\newcommand{\T}{\mathbb{T}}

\newcommand{\al}{\alpha}
\newcommand{\be}{\beta}
\newcommand{\wt}{\widetilde}
\newcommand{\cK}{\mathcal{K}}

\newcommand{\de}{\delta}
\newcommand{\ga}{\gamma}
\newcommand{\la}{\lambda}
\newcommand{\tht}{\theta}
\newcommand{\si}{\sigma}

\newcommand{\fD}{\mathfrak{D}}
\newcommand{\DY}{\color{magenta}}

\theoremstyle{definition}

\theoremstyle{remark}

\title[\footnotesize Operators with small Kreiss constants]{\large Operators with small Kreiss constants}

\author[\footnotesize CHALMOUKIS]{NIKOLAOS CHALMOUKIS}
\address{Department of Mathematics, University of Milano - Bicocca, Milan, Italy}
\email{nikolaos.chalmoukis@unimib.it} 
\author[\footnotesize TSIKALAS]{GEORGIOS TSIKALAS}
\address{Department of Mathematics, Vanderbilt University, Nashville, TN}
\email{georgios.tsikalas@vanderbilt.edu} 

\author[\footnotesize YAKUBOVICH]{DMITRY YAKUBOVICH}
\address{Departamento de Matem\'aticas, Universidad Aut\'onoma de Madrid, Canto-blanco, 28049 Madrid, Spain}
\email{dmitry.yakubovich@uam.es}

\thanks{
The first author is a member of the INdAM group GNAMPA. The second and third authors  were partially supported by Plan Nacional  I+D grant no. PID2022-137294NB-I00, Spain.
This work has also been supported by the Madrid Government (Comunidad deMadrid – Spain) under the multiannual Agreement with UAM in the line for the Excellence of the University Research Staff in the context of the V PRICIT (Regional Programme of Research and Technological Innovation).
}

\subjclass[2020]{47A20, 47A65} 
\keywords{Power-bounded operators, resolvent, Kreiss Matrix Theorem, similarity, double-layer potential}
\small
\begin{abstract}
    \small
       We investigate matrices satisfying the Kreiss condition
       \[\|(zI-T)^{-1}\|\le\cfrac{K}{|z|-1}, \hspace*{0.3 cm} |z|>1, \]
       with $K$ lying arbitrarily close to $1.$ We provide lower bounds for the power growth of such matrices, which complement and refine related estimates due to Nikolski and Spijker-Tracogna-Welfert.

       We also study operators that satisfy a variant of the above Kreiss condition where $K$ is replaced by 
     $1+\eps(|z|)$, where the positive continuous function  $\eps(|z|)$ tends to $0$ as $|z|\to 1^+.$
       We show that, 
       if the spectrum of $T$ touches the unit circle only at a single point and the   
       resolvent of $T$ satisfies a growth restriction along the unit circle, it is possible to choose $\eps$ so that this Kreiss-type condition guarantees similarity to a contraction. At the core of our proof lies a positivity argument involving the double-layer potential operator. Counterexamples related to less restrictive choices of $\eps$ are also provided.

\end{abstract}
\maketitle

\section{Introduction}
\subsection{The Kreiss condition} The Kreiss Matrix Theorem,
a well-known and important fact in applied matrix analysis, concerns matrices satisfying the following resolvent condition.
\begin{definition}
An $N\times N$ matrix $A$ is said to satisfy the \textit{Kreiss resolvent condition} if the spectrum of $A$ is contained in $\{|z|\le 1\}$ and there exists a constant $K\ge 1$ such that
\begin{equation}\label{def}
    \|(zI-A)^{-1}\|\le \cfrac{K}{|z|-1}, \hspace{0.5 cm} \forall |z|>1. 
\end{equation}
The least constant $K$ such that \eqref{def} holds is termed the \textit{Kreiss constant} of $A.$
\end{definition}
Kreiss \cite{Kreiss62} proved that every matrix that satisfies the above  condition is power-bounded. More specifically, for every $N\times N$ matrix $A$ with Kreiss constant $K$, there exists $M(N, K)>0$ with 
\[\|A^n\|\le M(N, K), \hspace{0.5 cm} \forall n\ge 1.\]
 In applications, however, the dimension $N$ may be very large (see e.g. \cite{Dorsselaer93}), so it is of interest to look for sharp constants $M(N, K)$.

 Indeed, in the years after Kreiss published his result, various authors made 
 successive improvements
 that reduced the value of $M(N, K)$ (see \cite[Section 18]{pseudospectrabook} for the full history and references). Tadmor \cite{Tadmor81} was the first to show that  $M(N, K)\le CKN$  for some numerical constant $C>0$. Finally, Spijker \cite{Spijkersharp}, making crucial use of a Bernstein-type inequality for rational functions, proved that $M(N, K)\le eKN$ and so, if $A$ is any $N\times N$ matrix with Kreiss constant $K,$ then 
\begin{equation} \label{spijk}
\|A^n\|\le e K N, \hspace{0.5 cm} \forall n\ge 1.     
\end{equation}
LeVeque and Trefethen \cite{LeVequeTref}  showed that \eqref{spijk} is sharp in the sense that, for every $N\ge 1,$ there exists an $N\times N$ matrix $A_N$ with Kreiss constant  $K_N$ such that
\[\frac{\sup_{n\ge 1}\|A^n_N\|}{K_N N}\to e \hspace{0.5 cm} \text{ as } N\to\infty. \]
\par This sequence $K_N$ is, however, unbounded, so it is natural to ask 
to what extent the Kreiss-Spijker estimate is sharp for an arbitrary \textit{fixed} Kreiss constant. To this end, define $P(N, K)$ to be the smallest number such that 
every $N\times N$ matrix $A$ with Kreiss constant 	less than or equal to
$K$ satisfies
\begin{equation} \label{Pdef}
\|A^n\|\le P(N, K), \hspace{0.5 cm} \forall n\ge 1.     
\end{equation}
By \eqref{spijk}, we have $P(N, K)\le eKN.$ What about lower bounds? The first results of this type are due to McCarthy-Schwartz \cite{McCarthySchwartz} and  van Dorsselaer-Kraaijevanger-Spijker \cite{Dorsselaer93}, who established lower bounds for $P(N, K)$ that were logarithmic in $N$.  Later, Spijker, Tracogna and Welfert \cite{STW}  almost bridged the gap with \eqref{spijk} by showing that, given $\epsilon>0,$ there exists $K=K(\epsilon)>\pi+1$ and $C=C(\epsilon)>0$  such that
\begin{equation} \label{spicktracwel}
   P(N, K) \ge C N^{1-\epsilon}, \hspace{0.5 cm} \forall N\ge 1.  
\end{equation}
Their construction was subsequently refined by Nikolski \cite{Nikolski}, who showed that, for any $K>\pi +1$ and $N\ge 1,$
\begin{equation}\label{nik}
   P(N, K)\ge \frac{N^{1-\frac{2}{K}}}{5}.  
\end{equation}
 More recently, the search for non-power-bounded operators that satisfy various Kreiss- and Ces\`aro-type conditions has focused on weighted shifts; see e.g.  \cite{AbsolCesaromuller, KreissanduniKreiss, CohenCunyEisnerLin}.     
    
\subsection{Small Kreiss constants} Unfortunately, all of the above-mentioned papers fall short of providing lower bounds for $P(N, K)$ for $K$ close to $1.$ In fact, it appears that the only time non-power-bounded operators with Kreiss-type constants close to $1$ were considered in the literature was in a paper of McCarthy \cite{McCarthy71lost} (see also the remark in \cite[p. 374]{Nikolski}), who proved that, for any $\epsilon>0,$ there exists $C=C(\epsilon)$ with
\[
P(N, 1+\epsilon)\ge C \sqrt{\log \log (N+1)}, \hspace{0.4 cm} N\ge 1. 
\]

Our first main result  is a substantial improvement of McCarthy's estimate.  Our proof is based on the construction of a certain family of  weighted shifts;  the novel ingredient in
our approach is the consideration of \textit{matrix weights}. More precisely, given $\epsilon>0$ and $m\in\mathbb{N}$, we show that there exists a constant $C=C(\epsilon, m)$ such that for all $N\ge 1,$
\begin{equation} \label{main}
    P(N, 1+\epsilon)\ge C \log^m (N).
\end{equation}
 
\noindent 
The exact same lower bound holds for the (more restrictive) \textit{uniform Kreiss condition} (see Section \ref{pre} for the definition); we prove this as Theorem \ref{uniformthm}.  
 In fact, our construction yields  lower bounds for even more general Kreiss- and Ces\'aro-type conditions. To be more precise, define $P_{AC}(N, K)$ as the supremum of the power bounds of all $N\times N$ matrices that are \textit{absolutely Ces\`aro bounded} (see Section \ref{pre}) with constant at most $K.$

%
%

\begin{theorem}\label{cesaromain}
	\begin{itemize}
	\item[(i)] Let $\epsilon>0$ and $m\in\mathbb{N}$. There exists a constant $C=C(\epsilon, m)$ such that for all $N\ge 1,$
	\begin{equation}\label{estim-i}
	P_{AC}(N, 1+\epsilon)\ge C \log^m (N).
	\end{equation}

     \item[(ii)] Let $\alpha\in (0,\frac 14)$.
     There exist $M=M(\alpha)>0$ and $\eps_0\in (0,1]$ such that, whenever $0<\eps<\eps_0$ and $\epsilon \log N\ge M$, one has 
	\begin{equation}\label{estim-ii}     
		P_{AC}(N, 1+\epsilon)\ge \exp\bigg(\,\frac{\alpha\log^2 \log N^{\eps}}{\log\log\log N^{\eps}}\bigg). 
    \end{equation}
\end{itemize}
\end{theorem}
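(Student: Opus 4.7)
The strategy is to construct, for each $\eps > 0$ and $m \in \NN$, an $N$-dimensional matrix-weighted shift whose powers grow at least like $\log^m N$ while the absolutely Ces\`aro constant stays at most $1 + \eps$. Recall that with $d \times d$ matrix weights $\{W_k\}$ on $(\CC^d)^M$, one has $T^n(e_j \otimes v) = e_{j+n} \otimes W_{j+n-1} \cdots W_j v$, and the absolute Ces\`aro bound at the vector $e_j \otimes v$ is
\[
\frac{1}{n}\sum_{k=0}^{n-1} \|W_{j+k-1}\cdots W_j v\|.
\]
The essential departure from the constructions of Nikolski \cite{Nikolski} and Spijker--Tracogna--Welfert \cite{STW} is that a product of matrix weights can have norm much larger than the product of their individual norms. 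This decouples the extremal product which controls $\|T^n\|$ from the long-term averages that control the absolutely Ces\`aro constant.

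For part (i), I would aim for a construction on a tensor product $\CC^{N_1} \otimes \cdots \otimes \CC^{N_m}$ with $N_1 \cdots N_m \asymp N$. At the base case $m = 1$, a suitable scalar weighted shift in the spirit of Nikolski gives power growth $\gtrsim \log N$ with Ces\`aro constant $\le 1+\eps$. The inductive step from $m-1$ to $m$ should glue a new weighted-shift factor onto the previous construction using matrix weights that act on mutually near-orthogonal directions; this is intended to multiply the power bound by another $\log$-factor while inflating the Ces\`aro constant by at most a factor of $1 + O(\eps/m)$, so that the cumulative effect over $m$ levels is still bounded by $1 + \eps$ after rescaling.

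The main obstacle is the verification of the absolutely Ces\`aro bound. In the scalar case, the product of weights appears directly in every Ces\`aro summand, which creates a rigid trade-off that caps the growth at a single logarithm for fixed $\eps$. In the matrix case, one must show that on any test vector $v$, a long product $W_{j+n-1}\cdots W_j v$ that happens to be large is compensated by the vast majority of intermediate products being near $\|v\|$, so the average remains under $(1+\eps)\|v\|$. I expect this to require partitioning the Ces\`aro window $[0,n-1]$ into short, medium, and long ranges and treating each via a tailored estimate, with the cancellation between the orthogonal directions of the matrix weights being the crucial ingredient. This is the delicate combinatorial--operator-theoretic heart of the argument.

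For part (ii), I would allow $m = m(\eps, N)$ to grow with both $\eps$ and $N$, optimizing $\log^m N$ subject to the constraint that the accumulated per-level loss $(1 + c\eps/m)^m$ remain under control and that the dimension budget $N_1 \cdots N_m$ fits inside $N$. Writing the dimension constraint in terms of $N^\eps$ and carrying out the calculus in $m$ yields an expression of the form $\exp\bigl(c \log^2 \log N^{\eps} / \log\log\log N^{\eps}\bigr)$; careful tracking of the admissible constants in the inductive step should produce precisely the bound in \eqref{estim-ii} for any $\alpha < 1/4$, with the upper cutoff $1/4$ arising naturally from balancing the quadratic logarithmic gain against the linear logarithmic cost at each level.
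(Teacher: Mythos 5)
Your proposal correctly identifies the paper's central novelty --- matrix weights decouple the norm of a long product (which drives $\|T^n\|$) from the individual weight norms (which drive the absolutely Ces\`aro average) --- and your heuristic for part (ii), optimizing $m$ against a per-level cost that is roughly quadratic-in-$m$ logarithmic, does land on the right shape and even the right source of the threshold $\alpha<\tfrac14$. But as written the argument has a genuine gap precisely at the point you yourself flag as ``the delicate combinatorial--operator-theoretic heart'': the verification of the absolutely Ces\`aro bound is never carried out, and the mechanism you propose for it (tensor factors acting in ``mutually near-orthogonal directions'' with ``cancellation'' keeping intermediate products near $\|v\|$) is not the one that works. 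In the paper the weights are nested unipotent block-triangular matrices, $A_{p+1,k}(c)=\bigl(\begin{smallmatrix} A_{p,k}(c) & c\log\frac{k}{k-1}\,I\\ 0 & A_{p,k}(c)\end{smallmatrix}\bigr)$, so a product of $N$ of them accumulates $c^m\log^m N$ in its top-right corner while each individual weight has norm $1+O(c/k)$; there is no orthogonality or cancellation involved, and the Ces\`aro constant is controlled \emph{additively} (writing $T^n_{m,c}=S^n+c\sum_\ell(\text{corner terms})$), not via a multiplicative $(1+c\eps/m)^m$ loss per level. The quantitative engine that makes the additive correction small --- the M\"uller--et--al.\ inequality asserting that the scalar shift with weights $(k/(k-1))^a$, $a<1/2$, is absolutely Ces\`aro bounded --- is entirely absent from your outline, and without it (or a substitute) the bound $K_{AC}\le 1+\eps$ does not follow.

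A second concrete problem is your base case: you assert that a \emph{scalar} weighted shift ``in the spirit of Nikolski'' gives power growth $\gtrsim\log N$ with Ces\`aro constant $\le 1+\eps$. Nikolski's and Spijker--Tracogna--Welfert's constructions require constants exceeding $\pi+1$, and the whole point of the problem is that no scalar construction with constant near $1$ and $\log N$ growth was known (McCarthy's scalar-type bound is only $\sqrt{\log\log N}$). In the paper the matrix weights are already needed at level one: it is the $2\times2$ unipotent block that produces the first $\log N$ while keeping the absolute Ces\`aro constant at $1+O(c)$. So the induction should start from the \emph{unweighted} shift and introduce one nilpotent corner per level, rather than from a scalar shift that already achieves the $m=1$ conclusion.
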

%
%
%
%
%
%
\noindent Since~\eqref{estim-i} says that 
\[
P_{AC}(N, 1+\epsilon)\ge \exp (m\log\log N+\log C), 
\] 
we can assert that~\eqref{estim-ii} is obtained from 
~\eqref{estim-i} by replacing the function 
$\exp(m\log \log N)$ with 
a function that grows slightly slower than $\exp(\alpha\log^2\log N)$. So, the estimate~\eqref{estim-ii} is considerably sharper than~\eqref{estim-i}. 

\par 

We also observe that  Theorem \ref{cesaromain} does not immediately follow from \eqref{main}; indeed, while it is well-known that absolute Ces\'aro boundedness implies the Kreiss condition, it could happen that the Kreiss constant of a matrix exceeds its absolute Ces\`aro constant.\par  A  lower bound corresponding to the \textit{strong Kreiss condition} can be found in \eqref{strongkreissineq}; see Remark \ref{further2}.
 
\subsection{The similarity problem}
We now move from matrices to infinite-dimensional linear operators.
Let $H$ be a complex Hilbert space and let $T\in\mathcal{B}(H)$ with $\sigma(T)\subset \overline \DD$. The Kreiss condition 
\[\label{Kreiss}
\|(\la-T)^{-1}\|\le \frac C{|\la|-1}, \quad \quad 1<|\la|\le 1+\nu
\]
(where $\nu>0$) no longer implies power-boundedness in this setting; classical counterexamples are contained in \cite{Foguelcounterex, HalmosFoguel}. On the other hand, if $C=1$, then $T$ is a $\rho$-contraction for some $\rho>1$, and so it is similar to a contraction (and thus power-bounded), no matter the value of $\nu$ (see e.g. \cite{Foiasbook}).  
\par 
In the second half of our paper, we examine several   conditions on $T$ that lie in-between the above $C=1$ and $C>1$ regimes, and determine whether they imply  similarity to a contraction. We point out that similarity to a contraction 
(and, more generally, $K$-spectral estimates)
has
already been linked to resolvent conditions several times in the literature, see e.g. \cite{BouabdLeMerdy, CassiergeneralizedToepl,deLauben,TestsforKsp, Gamal}. For more references and recent developments concerning the similarity problem, especially in relation to operator semigroups, see \cite{mazatomilov}. One of the conditions we are interested in, naturally motivated by the consideration of ``small" Kreiss constants, is 
\begin{equation}\label{eps-resolv-estim}
\|(\la-T)^{-1}\|\le \frac {1+\eps(|\la|-1)}{|\la|-1} \quad \quad 1<|\la|\le 1+\nu,
\end{equation}
where $\eps:(0,\nu)\to(0,+\infty)$ is a continuous function such that $\eps(0^+)=0$. We use our weighted shifts from the proof of Theorem \ref{cesaromain} to give an explicit choice for $\eps$ such that ~\eqref{eps-resolv-estim} is satisfied but $T$ is not similar to a contraction (in fact, not even power-bounded); this is Theorem \ref{resolvcounter}. \par 
We also consider operators $T$ such that $\lim_{n\to\infty}\|T^n\|=1$. Notice that if $\|T^m\|\le 1$ 
for some $m\in\N$, then $T$ is similar to a contraction, see ~[\cite{Halmos-ten}, discussion of Problem~6].  
It is easy to see that the condition $\lim_{n\to\infty}\|T^n\|=1$ implies~\eqref{eps-resolv-estim} for a certain function 
$\eps(x)$ vanishing at $0^+$. By adapting Pisier's example of an operator that is polynomially bounded but not similar to a contraction \cite{Pisier-result}, we
 show
\begin{theorem}\label{thm-Pisier-counterex}
	Let $\{\be_n:n\ge 1\}$ be a positive sequence such that 
	$\lim_{n\to\infty}\be_n=0$, 
	$\sum_n \be_n/n=\infty$ and the sequence $\{n^{-2}\be_n\}$ is decreasing. Then there 
	exists a Hilbert space operator $F$ such that 
	\[
	\|F^n\|\le 1+\sqrt{\be_n}, 
	\]
	but $F$ is not similar to a contraction. 	 
\end{theorem}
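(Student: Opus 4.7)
The plan is to closely follow Pisier's construction \cite{Pisier-result}, but with weights carefully tuned to the sequence $\{\be_n\}$. Recall that Pisier's operator has the form of a $2\times 2$ upper-triangular block matrix whose diagonal blocks are model isometries (shifts on $\ell^2$ tensored with an auxiliary Hilbert space) and whose upper-right corner is an operator-valued Hankel $\Gamma$ with symbol $\sum_{k\ge 1}\ga_k V_k z^k$. The scalars $\ga_k>0$ are free parameters, while the $V_k$ are chosen to be ``noncommutatively independent'' unitaries (e.g., free Haar unitaries in a tracial von Neumann algebra, or suitably large random matrices), so that the noncommutative Khintchine inequality gives two-sided control
\[
\Big\|\sum_k c_k V_k\Big\|\asymp\Big(\sum_k |c_k|^2\Big)^{1/2}
\]
for scalar coefficients $c_k$ (and similarly, with row/column square functions, for vector-valued ones).

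I would take Pisier's block form verbatim but set $\ga_k:=c\,\sqrt{\be_k/k}$ for an absolute constant $c>0$ chosen small at the end. A direct computation gives
\[
F^n=\begin{pmatrix} U_1^n & \Gamma_n\\ 0 & U_2^n\end{pmatrix},
\]
where $\Gamma_n$ is an operator-valued Hankel whose nonzero frequencies start at $n$. Applying Khintchine reduces the bound on $\|\Gamma_n\|$ to a weighted square-sum of the form $\sum_{k\ge n}\ga_k^2\, w_k^{(n)}$, where the weights $w_k^{(n)}$ encode the Hankel geometry. The hypothesis that $\{n^{-2}\be_n\}$ decreases implies $\be_k/k\le (k/n)\cdot \be_n/n$ for $k\ge n$, and combined with the decay of $w_k^{(n)}$ this should yield $\|\Gamma_n\|\le \sqrt{\be_n}$. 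Using the triangle inequality for the block decomposition of $F^n$ and $\|U_i^n\|\le 1$, one concludes $\|F^n\|\le 1+\sqrt{\be_n}$ after fixing $c$.

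For the non-similarity, I would invoke Paulsen's theorem: similarity to a contraction is equivalent to complete polynomial boundedness. Thus it suffices to exhibit matrix-valued polynomials $P_N(z)$ with $\|P_N\|_{\infty,\T}$ bounded and $\|P_N(F)\|\to\infty$. Following Pisier, one takes $P_N(z):=\sum_{k=1}^N V_k^*\otimes z^k$, suitably truncated to take values in $M_N$. The upper bound on $\|P_N\|_{\infty,\T}$ comes from Khintchine, while the coefficients of $P_N$ pair with the Hankel symbol of $F$ so that the matching lower Khintchine estimate gives $\|P_N(F)\|\gtrsim\Big(\sum_{k=1}^N\ga_k^2\Big)^{1/2}=\Big(\sum_{k=1}^N c^2\be_k/k\Big)^{1/2}$, which diverges by hypothesis. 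The hard part will be executing this pairing argument rigorously: Pisier's proof relies on a delicate factorization of the Hankel symbol as a completely bounded multiplier, and one must verify that all intermediate estimates survive replacing his constant-coefficient Hankel by our slowly varying weights $\sqrt{\be_k/k}$. The monotonicity assumption on $\{n^{-2}\be_n\}$ should be exactly what is needed to keep the sums appearing at the two ends of this chain of inequalities equivalent up to absolute constants, preserving the contradiction.
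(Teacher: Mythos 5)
Your overall strategy (a Foguel--Hankel operator $F=\bigl[\begin{smallmatrix} S^* & X \\ 0 & S\end{smallmatrix}\bigr]$ with a CAR/free-unitary Hankel corner, an upper bound on $\|F^n\|$ via noncommutative Khintchine, and non-similarity via Pisier's complete-boundedness obstruction) is the same as the paper's, which runs the argument through the Paulsen-book version of Pisier's example. But your choice of coefficients is wrong, and it breaks the proof at the first quantitative step. For this class of Hankel operators the Khintchine estimate is genuinely two-sided with \emph{no} additional decaying weights: writing $X_n$ for the $(1,2)$-entry of $F^n$, one has $\|X_n\|^2\asymp n^2\sum_{k\ge n-1}|a_k|^2$ (Lemma~\ref{lem:Paulsenbook}(ii)). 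With your choice $|a_k|^2=c^2\be_k/k$, already the tail $\sum_{k}|a_k|^2=c^2\sum_k\be_k/k$ diverges by the very hypothesis $\sum_k\be_k/k=\infty$, so $X=X_1$ is not even a bounded operator; the hoped-for weights $w_k^{(n)}$ that would ``rescue'' the sum do not exist. In short, you have placed the divergent series in the norm estimate, where it must converge, rather than in the similarity obstruction, where it must diverge. (Your inequality $\be_k/k\le (k/n)\be_n/n$ points the wrong way: it is an upper bound that \emph{grows} in $k$.)

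The correct choice is the telescoping one: set $|a_n|^2=\frac{\be_{n+1}}{(n+1)^2}-\frac{\be_{n+2}}{(n+2)^2}$, which is nonnegative precisely because $\{n^{-2}\be_n\}$ decreases --- this, not a comparability of sums, is what the monotonicity hypothesis is for. Then the tail telescopes exactly, $n^2\sum_{k\ge n-1}|a_k|^2=\be_n$, giving $\|X_n\|\le\sqrt{\be_n}$ and hence $\|F^n\|\le 1+\sqrt{\be_n}$, while
\[
\sum_{n\ge 0}(n+1)^2|a_n|^2=\be_1+\sum_{n\ge 0}\Bigl(1-\bigl(\tfrac{n+1}{n+2}\bigr)^2\Bigr)\be_{n+2}\asymp\sum_n \be_n/n=\infty,
\]
so the Pisier--Paulsen criterion (divergence of $\sum(k+1)^2|a_k|^2$ forbids similarity to a contraction for this class) yields non-similarity. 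With that criterion quoted as a black box, there is no need to rerun the completely bounded multiplier factorization and the test-polynomial pairing you sketch in your last paragraph.
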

\noindent In particular, one can take $\be_n=1/\log(n+1)$. In this case, the operator $F$ will satisfy \eqref{eps-resolv-estim} with $\eps(x)\simeq [\log(1/x)]^{-1/2}$; see Remark \ref{epsconversion}.

\par 

It seems to be unknown whether there exists a function $\eps(x)$, which goes (rapidly) 
to $0$ as $x\to 0^+$, such that~\eqref{eps-resolv-estim}
guarantees the similarity of $T$ to a contraction. 
However, we are able to give positive results of this type under some stronger assumptions on~$T$. For convenience, we introduce the following definition. 

\begin{definition}\label{v-type-def}
Let $r:[-\de_-,\de_+]\to [1,+\infty)$ be a continuous function, where 
$\de_-,\de_+\in(0,\pi)$. Consider the corresponding curve 
 $\ga(\theta)=r(\theta)e^{-i\theta}$,
$\theta\in  [-\de_-,\de_+],$ in the complex plane. We will say that 
$\ga$ {\it is a v-type curve} if the following 
conditions hold. 
\begin{itemize}
	\item[(a)]$r(\cdot)$ strictly increases on $[0, \de_+]$ and strictly decreases on $[-\de_-, 0]$; 	
	
	\item[(b)] $r(0)=1$ and $r(-\de_-)=r(\de_+)>1$;
	
	\item[(c)] ${\displaystyle \int^{\de_+}_{-\de_{-}}\cfrac{1}{r(\theta)-1}\ d\theta<\infty}$;
%
\end{itemize}
\end{definition}


%

\noindent A simple example of a function $r(\cdot)$ which gives rise to a 
v-type curve is $r(\theta)=1+|\theta|^p,$ where $0<p<1$. 
If $\ga$ is a v-type curve, then 
its union with the arc 
$\{  r(\de_+)e^{-i\tht}; \; -\de_-\le \tht\le \de_+\}$ 
is a Jordan curve. We will say that the closed domain, bordered by this curve, is 
{\it the set determined by the v-type curve $\ga$}. Here is our second main result.
 
%

%
%

\begin{theorem}\label{genKreiss2}
	Let $T\in\mathcal{B}(H)$ with $\sigma(T)\subset \DD\cup\{1\}$, with $\ga:[-\de_-,\de_+]\to \C$ a v-type curve. 
%
%
%
Assume that $T$ satisfies the Kreiss condition and, moreover, $\varepsilon: (0, \nu]\to (0, \infty)$ is a continuous increasing function 
	such that $\eps(0^+)=0$ and  the resolvent estimate~\eqref{eps-resolv-estim} holds for all $\la$ 
    in the set determined by $\ga$ (excluding $\la=1$). 
	Set $r(\tht)=|\ga(\tht)|$. If 
\[
\displaystyle \int_{-\de_-}^{\de_+}\cfrac{\varepsilon(r(\theta)-1)}{r(\theta)-1}\,\|(e^{i\theta}-T)^{-1}\|^2\, d\theta <\infty, 
\]
then $T$ is similar to a contraction. 

\end{theorem}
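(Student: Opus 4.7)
The plan is to construct a bounded, boundedly invertible, positive operator $P\in\mathcal{B}(H)$ satisfying $T^*PT\le P$; this yields an equivalent Hilbert space inner product $\langle x,y\rangle_P:=\langle Px,y\rangle$ under which $T$ becomes a contraction, which is the definition of similarity to a contraction. Let $\Omega$ be the set determined by the v-type curve $\ga$, with $\partial\Omega=\ga\cup\Gamma_+$, where $\Gamma_+$ is the outer circular arc of radius $r(\de_+)$. Since $\sigma(T)\subset \DD\cup\{1\}$, the boundary $\partial\Omega$ meets $\sigma(T)$ only at $\la=1$; the resolvent is uniformly bounded on $\Gamma_+$ by the Kreiss condition and is controlled near $1$ by the enhanced estimate~\eqref{eps-resolv-estim}. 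I would take
\[
\langle Px,x\rangle \;:=\; \|x\|^2 + \int_{\ga}\|(\la-T)^{-1}x\|^2\,w(\la)\,|d\la|,
\]
where the positive weight $w(\la)=(|\la|-1)\rho(\la)$ is chosen with $\rho$ proportional to the boundary density of the double-layer potential kernel for $\partial\Omega$. Condition~(c) in Definition~\ref{v-type-def}, together with~\eqref{eps-resolv-estim}, ensures the integral converges, and the leading $\|x\|^2$ gives $P\ge I$.

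The heart of the argument is evaluating $P-T^*PT$. Using the resolvent identity $(\la-T)^{-1}T=\la(\la-T)^{-1}-I$, one obtains the pointwise identity on $\ga$
\[
\|(\la-T)^{-1}x\|^2-\|(\la-T)^{-1}Tx\|^2 = (1-|\la|^2)\|(\la-T)^{-1}x\|^2 + 2\Re\bar\la\langle(\la-T)^{-1}x,x\rangle - \|x\|^2.
\]
Integrating against $w(\la)|d\la|$ produces three contributions: a negative radial defect (since $|\la|>1$ on $\ga\setminus\{1\}$), a cross term, and a constant piece. The cross term, after integration by parts along $\ga$ and an application of Green's formula on $\Omega$, equals the action of the classical \emph{double-layer potential operator} of the Jordan curve $\partial\Omega$ on the boundary function $\la\mapsto\langle(\la-T)^{-1}x,x\rangle$; this operator is nonnegative, and its positive contribution exactly balances the leading-order part (the $1/(|\la|-1)^2$ piece coming from~\eqref{eps-resolv-estim}) of the negative radial defect.

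Finally, the ``extra'' $\eps$-error in~\eqref{eps-resolv-estim}, obtained by expanding $(1+\eps(|\la|-1))^2=1+[2\eps+\eps^2]$, must be absorbed. Transferring from $\la=r(\tht)e^{-i\tht}\in\ga$ to the nearby $e^{i\tht}\in\T$ via the identity
\[
(\la-T)^{-1} = (e^{i\tht}-T)^{-1} + (e^{i\tht}-\la)(\la-T)^{-1}(e^{i\tht}-T)^{-1},
\]
the $\eps$-contribution to $\langle(P-T^*PT)x,x\rangle$ is majorized by a constant times
\[
\int_{-\de_-}^{\de_+}\frac{\eps(r(\tht)-1)}{r(\tht)-1}\,\|(e^{i\tht}-T)^{-1}\|^2\, d\tht,
\]
which is finite by hypothesis. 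Adding a sufficiently large multiple of $I$ to $P$ absorbs this error and gives $T^*PT\le P$. The main obstacle is the construction and analysis of the weight $w$ so that the cross term above matches precisely the double-layer potential kernel of $\partial\Omega$, together with the verification of the positivity of that operator for the non-smooth geometry of $\partial\Omega$ (a Jordan curve with an inward cusp at $\la=1$, regularized by condition~(c)); once this matching and positivity are in place, the remaining estimates reducing the $\eps$-error to the hypothesis integral are technical but routine.
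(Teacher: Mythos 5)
Your target inequality is legitimate: a bounded, boundedly invertible $P\succeq I$ with $T^*PT\le P$ would indeed give similarity to a contraction. But two steps in your argument are genuinely broken. First, the final absorption step fails: replacing $P$ by $P+cI$ changes $P-T^*PT$ by $c(I-T^*T)$, which is \emph{not} positive semi-definite unless $T$ is already a contraction --- precisely what you are trying to prove. So the negative $O(\|x\|^2)$ contributions your computation produces (the $-\|x\|^2\int_\ga w$ piece of the pointwise identity, and the $-\|Tx\|^2$ term from the leading $\|x\|^2$ in $\langle Px,x\rangle$) cannot be absorbed this way. Second, the crucial positivity claim --- that the cross term, after Green's formula, becomes the classical double-layer (Neumann--Poincar\'e) boundary operator of $\partial\Omega$ acting on $\la\mapsto\langle(\la-T)^{-1}x,x\rangle$, and that this operator is nonnegative --- is both unsubstantiated and, as stated, false in general: the Neumann--Poincar\'e operator of a Jordan curve is not a positive operator on the boundary except in very special geometries, and here $\partial\Omega$ has a corner at $\la=1$. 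You flag this as ``the main obstacle,'' but it is exactly the mathematical content of the theorem, so deferring it leaves the proof without its core.

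For comparison, the paper does not build a Lyapunov-type $P$; it proves a $K$-spectral estimate $\|p(T)\|\le K\sup_{\T}|p|$ and then invokes Paulsen's theorem (with deLaubenfels' result handling complete boundedness). The polynomial $p$ is integrated against the \emph{operator-valued} double-layer density $\mu(\sigma_n(\theta),T)$ over circles $C_n$ of radius $r_n\downarrow 1$, and the positivity used is pointwise in $\theta$: Lemma \ref{operpoisson} factors $\mu(\sigma_n(\theta),T)+\tfrac{r_n}{2\pi}R_n(\theta)^{-1}$ explicitly as $A^{-1}B(\cdots)B^*A^{-*}$, where the middle factor is made positive semi-definite by placing the auxiliary point on the v-type curve and invoking \eqref{eps-resolv-estim} there; the residual $\fD_n(\theta)$ is then shown to be uniformly integrable using exactly your hypothesis integral (Lemma \ref{lem:Pn-Dn}). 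This pointwise, factorization-based positivity is an entirely different mechanism from the scalar boundary-integral-operator positivity you invoke; to salvage your route you would need to replace your unproven claim with something of that nature, and separately repair the $P+cI$ step.
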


%
%

\noindent In other words, suppose that $\si(T)$ is contained in $\DD\cup\{1\}$ 
and one knows an estimate of $\|(\la-T)^{-1}\|$ for $\la$ 
on the unit circle $\mathbb T$. 
Then, given a v-type curve $\ga$, one can exhibit a positive function $\eps$ such that whenever the estimate~\eqref{eps-resolv-estim} holds on the set determined by this curve,  $T$ is similar to a contraction.

\par

Let us state a particular case of this theorem for 
Ritt 
operators. 
Recall that $T$ is called a Ritt operator if 
there exists $C>0$ such that 
\[ \|(\lambda-T)^{-1}\|\le \frac{C}{|\la-1|}\]
for all $|\la|>1$. 
Any Ritt operator satisfies $\si(T)\subset\DD\cup\{1\}$. 
The point is that, since we can estimate the resolvent of $T$ along $\mathbb{T}$ if $T$ is Ritt, it is possible to make a uniform choice of $\varepsilon$ for such operators. 
Indeed, there exists $M>0$ such that 
\begin{equation} \label{reest}
\|(e^{i\theta}-T)^{-1}\|\le \frac{M}{|\theta|}    
\end{equation}
 for all $\theta$ (see e.g. \cite[Section 1.2]{schwenntadritt}). 
Thus, in view of Theorem \ref{genKreiss2}, we immediately obtain
\begin{corollary}
Let $T\in\mathcal{B}(H)$ be a Ritt operator. Assume there exists $\varepsilon: (0, \nu]\to(0, \infty)$ continuous and increasing such that $\epsilon(0^+)=0$ and~\eqref{eps-resolv-estim} holds in the set determined by some v-type curve $\ga$. 
%
%
If $\varepsilon$ satisfies 
 \[
 \int_{-\de_-}^{\de_+}\frac{\varepsilon(r(\theta)-1)}{|\theta|^2(r(\theta)-1)}\ d\theta<\infty,
 \]
 where $r(\tht)=|\ga(\tht)|$, then $T$ is similar to a contraction. 
 \end{corollary}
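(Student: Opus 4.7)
The plan is to derive this corollary directly as a specialization of Theorem \ref{genKreiss2} to operators whose resolvent enjoys a uniform $1/|\theta|$-bound on the unit circle away from $\{1\}$. I first verify that the standing hypotheses of Theorem \ref{genKreiss2} are met. The spectral inclusion $\sigma(T)\subset\DD\cup\{1\}$ is built into the definition of a Ritt operator. The Kreiss condition is automatic, because $|\lambda|-1\le |\lambda-1|$ for $|\lambda|>1$, so the Ritt estimate $\|(\lambda-T)^{-1}\|\le C/|\lambda-1|$ majorizes $C/(|\lambda|-1)$. The function $\eps$ and the v-type curve $\ga$, together with the resolvent estimate \eqref{eps-resolv-estim} on the set determined by $\ga$, are supplied by the hypotheses of the corollary.

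It therefore only remains to check that the integrability condition
\[
\int_{-\de_-}^{\de_+} \frac{\varepsilon(r(\theta)-1)}{r(\theta)-1}\,\|(e^{i\theta}-T)^{-1}\|^2\, d\theta < \infty
\]
required by Theorem \ref{genKreiss2} follows from the integrability hypothesis of the corollary. This is where the Ritt assumption is used, through the uniform bound \eqref{reest}: squaring that estimate and inserting it into the integrand gives
\[
\int_{-\de_-}^{\de_+}\frac{\varepsilon(r(\theta)-1)}{r(\theta)-1}\,\|(e^{i\theta}-T)^{-1}\|^2\, d\theta \;\le\; M^2 \int_{-\de_-}^{\de_+}\frac{\varepsilon(r(\theta)-1)}{|\theta|^2\,(r(\theta)-1)}\, d\theta,
\]
and the right-hand side is finite by assumption. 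Applying Theorem \ref{genKreiss2} then yields similarity of $T$ to a contraction.

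There is no real obstacle here: all analytic content (the double-layer potential positivity argument) is absorbed into Theorem \ref{genKreiss2}, and the role of the Ritt hypothesis is purely to replace the operator-theoretic quantity $\|(e^{i\theta}-T)^{-1}\|^2$ by the scalar function $M^2/\theta^2$ so that the general integrability condition becomes the purely geometric one stated in the corollary.
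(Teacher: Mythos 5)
Your proposal is correct and coincides with the paper's argument: the corollary is deduced directly from Theorem \ref{genKreiss2} by inserting the Ritt resolvent bound \eqref{reest}, $\|(e^{i\theta}-T)^{-1}\|\le M/|\theta|$, into the integrability condition, after noting that a Ritt operator satisfies the Kreiss condition and has $\sigma(T)\subset\DD\cup\{1\}$. No issues.
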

 \noindent We note that Ritt operators, while power-bounded, are not, in general, similar to contractions \cite[Proposition 5.2]{LMsimprob}.

\subsection{Outline}  Our paper is organized as follows: in Section \ref{pre}, we briefly go over the definitions and the relations between those Kreiss- and Ces\'aro-type conditions that are relevant to this work. We also record an inequality from \cite{AbsolCesaromuller} that will 
be very useful in the proof of Theorem \ref{cesaromain}.
 Section \ref{mainproof} contains the proofs of Theorem \ref{cesaromain} and Theorem \ref{uniformthm}, and, in addition, a lower bound corresponding to the strong Kreiss condition (Remark \ref{further2}). Theorem \ref{genKreiss2} is proved in Section \ref{genKreiss2sec}. Section \ref{counterexsection} contains the proofs of Theorems \ref{resolvcounter} and \ref{thm-Pisier-counterex}. Section \ref{questions} concludes with some open questions related to our investigations.

\section{Kreiss- and Ces\`aro-type conditions}\label{pre}

  The following conditions often appear in the literature in conjuction with the Kreiss Matris Theorem.
\begin{definition}
 Let $T\in\mathcal{B}(H).$ We say that $T$  \begin{itemize}
     \item[(i)] satisfies the \textit{uniform Kreiss condition} if there exists $K_U>0$ such that 
\[\Big\|\sum_{k=0}^nz^{-k-1}T^k\Big\|\le \frac{K_U}{|z|-1},\hspace{0.2 cm} \text{ for all } |z|>1 \text{ and } n=0, 1, 2,\dots;\]

     \item[(ii)] satisfies the \textit{strong Kreiss condition} if there exists $K_{S}>0$ such that 
\[
\big\|(zI-T)^{-k}\big\|\le \frac{K_S}{(|z|-1)^k},\hspace{0.2 cm} \text{ for all } |z|>1 \text{ and } k=1, 2,\dots; 
\]  
     \item[(iii)] is \textit{Ces\`aro bounded} if there exists $K_C>0$ such that 
     \[\|M_n(T)\|:=\bigg\|\frac{1}{n+1}\sum_{k=0}^nT^k\bigg\|\le K_C,\hspace{0.4 cm} n= 1, 2, 3, \dots;\]
     \item[(iv)] is \textit{absolutely Ces\`aro bounded} if there exists $K_{AC}>0$ such that 
\[\sup_{n\in\mathbb{N}}\frac{1}{n}\sum_{j=1}^n\|T^jx\|\le K_{AC}\|x\|, \hspace{0.4 cm} \forall x\in H. \]
 \end{itemize}
 \end{definition}
Clearly, power-boundedness implies all of the above conditions. Absolute Ces\`aro boundedness and the strong Kreiss condition are both strictly stronger than the uniform Kreiss condition, see \cite{ AbsolCesaromuller, GomilkoZemanek, Mont}, while the two are independent of one another \cite{CohenCunyEisnerLin}. Further, the uniform Kreiss condition is equivalent \cite{Mont} to the existence of a constant $C>0$ such that, for all $n\ge 1$ and $|\lambda|=1,$
\begin{equation}\label{unkreisscharact}
 \|M_n(\lambda T)\|\le C,   
\end{equation}
and is thus stronger than Ces\'aro boundedness (and also, trivially, stronger than the Kreiss condition). The Kreiss condition and Ces\`aro boundedness are independent \cite{SuciuZem}. Finally, while the strong Kreiss condition does not imply power-boundedness \cite{CohenCunyEisnerLin}, we observe that the similar-looking condition 
\[
\big\|T^k(zI-T)^{-k}\big\|\le \frac{K'}{(|z|-1)^k},\hspace{0.2 cm} \text{ for all } |z|>1 \text{ and } k=1, 2,\dots,  
\]  
is, in fact, equivalent to power-boundedness 
\cite{ Gibson, Packel}.  \par 
The definition of $P(N, K)$ can be adapted to all of the above conditions. For instance, we may define $P_U(N, K)$ to be the supremum of the power bounds of all $N\times N$ matrices that satisfy the uniform Kreiss condition  with constant $K,$ with analogous definitions for $P_{S},$ $P_{AC}$ and $P_C.$ We observe that, trivially, 
\[P(N, K)\ge P_{U}(N, K)\ge P_{S}(N, K) \hspace*{0.3 cm}  \text{ and }  \hspace*{0.3 cm}  P_{C}(N, K)\ge P_{AC}(N, K), \]
for all $N\ge 1$ and $K\ge 1.$

An interesting example of an operator that is absolutely Ces\'aro but not power bounded is given in [\cite{AbsolCesaromuller}, Theorem 2.1] (we only record the $\ell^2$-version here):
\begin{theorem}\label{mullerthm}
    Let $T$ be the weighted backward shift on $\ell^2(\mathbb{N})$ defined by $Te_1=0$ and $Te_k=w_ke_{k-1}$ for $k>1.$ If $w_k=\big(\frac{k}{k-1}\big)^a$, with $0<a<1/2,$ then $T$ is not power-bounded but is absolutely Ces\`aro bounded with constant 
    \[K_{AC}\le  \sqrt{2(1-2a)^{-1}+2}.\]
\end{theorem}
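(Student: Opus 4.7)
The plan is to exploit the explicit structure of $T$ as a weighted backward shift.

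First I would derive a closed form for the iterates. Iterating the definition and telescoping the weights $w_k=(k/(k-1))^a$ yields
\[
T^je_k=\Bigl(\frac{k}{k-j}\Bigr)^a e_{k-j}\quad\text{for } 1\le j<k,\qquad T^je_k=0\quad\text{for } j\ge k.
\]
Taking $j=k-1$ gives $\|T^{k-1}e_k\|=k^a\to\infty$, so $T$ is not power-bounded.

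For the absolute Ces\`aro bound, I would pass to the $\ell^2$-average via Cauchy--Schwarz,
\[
\frac{1}{n}\sum_{j=1}^n\|T^jx\|\le\Bigl(\frac{1}{n}\sum_{j=1}^n\|T^jx\|^2\Bigr)^{1/2}.
\]
For fixed $j$, the vectors $\{T^je_k\}_{k>j}$ are supported on pairwise distinct basis elements, so for $x=\sum_k x_k e_k$ one has $\|T^jx\|^2=\sum_{k>j}|x_k|^2(k/(k-j))^{2a}$. Swapping the order of summation reduces the problem to the uniform estimate
\[
S(n,k):=k^{2a}\sum_{i=\max(k-n,1)}^{k-1}i^{-2a}\le K_{AC}^2\,n\quad\text{for all } n,k\ge 1,
\]
with $K_{AC}^2\le 2/(1-2a)+2$.

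The core of the proof is this uniform bound on $S(n,k)$. Using the decreasing monotonicity of $t\mapsto t^{-2a}$ together with its integrability at $0$ (since $2a<1$), one has the comparison $\sum_{i=m}^{M} i^{-2a}\le \int_{m-1}^{M} t^{-2a}\,dt$. I would split on the size of $k$ relative to $n$. When $k\le 2n$, enlarging the index set down to $i=1$ gives $\sum_{i=1}^{k-1}i^{-2a}\le (k-1)^{1-2a}/(1-2a)$, so $S(n,k)\le k^{2a}\cdot k^{1-2a}/(1-2a)=k/(1-2a)\le 2n/(1-2a)$. When $k>2n$, every index $i\in[k-n,k-1]$ satisfies $i\ge k/2$, hence $i^{-2a}\le 2^{2a}k^{-2a}$, and summing $n$ such terms yields $S(n,k)\le 2^{2a}n\le 2n$. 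The two cases combine to give $S(n,k)\le 2n/(1-2a)+2n$ for all $k$, producing the claimed constant.

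The main obstacle is controlling $S(n,k)$ uniformly when $k$ lies just above $n$. The naive maximum-term estimate $\sum_{i=k-n}^{k-1}i^{-2a}\le n(k-n)^{-2a}$ introduces a factor $(k/(k-n))^{2a}$ that becomes arbitrarily large as $k\downarrow n+1$. The integral comparison circumvents this by trading the unbounded factor for the clean quantity $k^{2a}\cdot k^{1-2a}=k$, which matches the linear-in-$n$ target.
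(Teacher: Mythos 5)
Your proof is correct. Note that the paper itself does not prove this statement --- it is quoted verbatim from \cite{AbsolCesaromuller} (Theorem 2.1 there), so there is no internal proof to compare against; your argument is, however, essentially the standard one behind that result. Every step checks out: the telescoping $T^je_k=(k/(k-j))^ae_{k-j}$ gives non-power-boundedness via $\|T^{k-1}e_k\|=k^a$; the Cauchy--Schwarz reduction to $\ell^2$-averages and the interchange of summation correctly reduce everything to the uniform bound $S(n,k)\le K^2n$; and your two-case analysis (integral comparison for $k\le 2n$, where the convergence of $\int_0 t^{-2a}\,dt$ for $a<1/2$ is exactly what is needed, and the pointwise bound $i\ge k/2$ for $k>2n$) yields $S(n,k)\le 2n/(1-2a)+2n$, reproducing the stated constant $\sqrt{2(1-2a)^{-1}+2}$ exactly.
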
\noindent
Thus, for any $\ell^2$ sequence of positive numbers $\{b_k\}$ and $0<a<1/2,$ we have 
 \begin{equation}\label{mullereq}
\sup_{N\in\mathbb{N}}\frac{1}{N}\sum_{k=1}^{N}\sqrt{\sum_{j=1}^{\infty}\bigg(\frac{k+j}{j}\bigg)^{2a}b^2_{k+j}} \le C(a) \sqrt{\sum_{k=1}^{\infty}b^2_k},
 \end{equation}
 with $C(a)=\sqrt{2(1-2a)^{-1}+2}.$ 
This inequality will play an important role in the proof of Theorem \ref{cesaromain}.

\section{Lower bounds for small Kreiss constants}\label{mainproof}
\subsection{Lower bounds for the absolute Ces\`aro condition} \label{cesarosubsection}

     Fix $m\ge 1.$ For $k\ge 2$ and $c>0,$ define the upper triangular
     $2^m\times 2^m$ matrix $A_{m, k}(c)$ inductively as follows: set $A_{0, k}(c)=1$, and, assuming the $2^p\times 2^p$ matrix $A_{p, k}(c)$ has been defined for $0\le p<m,$ set 
     \begin{equation} \label{adef}
         A_{p+1, k}(c)=\begin{bmatrix}
A_{p, k}(c) & c\log \big(\frac{k}{k-1}\big)I_{2^p} \\
0 & A_{p, k}(c)
\end{bmatrix},\
     \end{equation}
where $I_{2^p}$ is the $2^p\times 2^p$ identity matrix. 
Further, define the weighted backwards shifts $T_{p, c}: \ell^2\big(\mathbb{C}^{2^p}\big)\to \ell^2\big(\mathbb{C}^{2^p})$, with $0\le p\le m$,
by 
\[
T_{p,c}(h_1, h_2, h_3, \dots) = \big(A_{p, 2}(c)h_2, A_{p, 3}(c)h_3, A_{p, 4}(c)h_4, \dots\big).\]
Notice that $T_{0, c}$ coincides with the usual (unweighted) backwards shift on $\ell^2.$

In what follows, we will deal with the powers of $T_{p+1, c}$. Notice that 
\begin{align*}
	& T_{p+1, c}^nh \\
	&=\Bigg(\bigg[\prod_{k=2}^{n+1}A_{p+1, k}(c)\bigg]h_{n+1}, \bigg[\prod_{k=2}^{n+1}A_{p+1, k+1}(c)\bigg]h_{n+2}, \bigg[\prod_{k=2}^{n+1}A_{p+1, k+2}(c)\bigg]h_{n+3},  \dots\Bigg) 
\end{align*} 
for $j\ge 0$ and 
\begin{equation} \label{variemaiii}
	\prod\limits_{k=2}^{n+1}A_{p+1, k+j}(c)=
	  \begin{bmatrix}
		\prod\limits_{k=2}^{n+1}A_{p, k+j}(c) & c B_{p, n, j+1}(c)\\
		0  & \prod\limits_{k=2}^{n+1}A_{p, k+j}(c)
	  \end{bmatrix}, 
 \end{equation}
where 

\[
B_{p, n,j+1}(c)
:=\sum\limits_{k=2+j}^{n+j+1}\log\Big(\frac{k}{k-1}\Big) 
\prod_{	\substack{j+2\le r\le n+j+1\\ r\neq k}}
 A_{p, r}(c). 
\]

\begin{lemma}\label{AA}
	For any $1\le p\le m$, integers $s\ge 1$ and $2\le k_1<k_2<\dots <k_s$ and constant $0\le c\le 1$, 
	\[
	\Big\|\prod_{i=1}^sA_{p, k_i}(c)\Big\|\le 2^{p-1}\Big[1+\log^p\, \frac{k_s}{k_1-1} \Big].
	\]
	%
	%
\end{lemma}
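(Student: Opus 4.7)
I would proceed by induction on $p$, making essential use of the fact that the off-diagonal block $c\log(k/(k-1))I_{2^p}$ in the definition~\eqref{adef} is a scalar, hence commutes with every $A_{p,k}(c)$. This forces the product $\prod_{i=1}^s A_{p+1,k_i}(c)$ to have the same block upper-triangular shape as each factor, with diagonal block $\prod_{i=1}^s A_{p,k_i}(c)$ and off-diagonal block
\[
c\tilde B := c\sum_{j=1}^s\log\Bigl(\frac{k_j}{k_j-1}\Bigr)\prod_{\substack{1\le i\le s\\ i\ne j}} A_{p,k_i}(c),
\]
exactly as in formula~\eqref{variemaiii}. Writing this block matrix as the sum of its block-diagonal piece and its strictly upper-triangular piece, the triangle inequality reduces the estimate to $\|\prod_i A_{p,k_i}(c)\|+c\|\tilde B\|$, both of which are amenable to the inductive hypothesis.

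For the base case $p=1$, the $2\times 2$ matrices $A_{1,k}(c)$ commute, and their product has $(1,2)$-entry $c\sum_{i=1}^s\log(k_i/(k_i-1))$. Since $\log(k/(k-1))\ge 0$, passing from the subsequence $\{k_i\}_{i=1}^s$ to all consecutive integers between $k_1$ and $k_s$ only enlarges the sum, and the enlarged sum telescopes to $\log(k_s/(k_1-1))$. Combined with $\|I+aE_{12}\|\le 1+|a|$ and $c\le 1$, this yields the base case with constant $2^0(1+L)$, where I abbreviate $L:=\log(k_s/(k_1-1))$.

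For the inductive step, each of the subproducts of $s-1$ matrices appearing inside $\tilde B$ still has strictly increasing indices lying in $[k_1,k_s]$, so the inductive hypothesis supplies the uniform bound $2^{p-1}(1+L^p)$ for each of them (this remains valid when $s=1$, where the empty product has norm $1\le 2^{p-1}(1+L^p)$). Combined with the same telescoping estimate for the sum of logarithms, this gives $\|\tilde B\|\le 2^{p-1}(1+L^p)\cdot L$, hence together with the inductive bound on the diagonal block,
\[
\Bigl\|\prod_{i=1}^s A_{p+1,k_i}(c)\Bigr\|\le 2^{p-1}(1+L^p)(1+L).
\]
The proof then reduces to the elementary one-variable inequality $(1+L)(1+L^p)\le 2(1+L^{p+1})$, which rearranges to $(L-1)(L^p-1)\ge 0$ and holds because $L$ and $L^p$ lie on the same side of $1$ (recall $L\ge 0$ since $k_s\ge k_1\ge 2$, and $p\ge 1$). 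Multiplying through by $2^{p-1}$ gives the target bound $2^p(1+L^{p+1})$, closing the induction. The main obstacle is purely bookkeeping: one must verify that the $s$ different subproducts inside $\tilde B$ can all be dominated by the same uniform inductive estimate coming from the full range $[k_1,k_s]$; once that is recognised, the entire argument collapses to the two inequalities above.
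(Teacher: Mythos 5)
Your proposal is correct and follows essentially the same route as the paper: the same block decomposition of the product via the scalar off-diagonal weights, the same triangle-inequality split into diagonal and off-diagonal blocks, the same uniform application of the inductive hypothesis to the subproducts together with the telescoping bound $\sum_i\log\frac{k_i}{k_i-1}\le\log\frac{k_s}{k_1-1}$, and the same concluding inequality $(1+L)(1+L^p)\le 2(1+L^{p+1})$ (which the paper leaves implicit in its final displayed step). No substantive differences.
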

\begin{proof}
	Fix integers $2\le k_1<k_2<\dots <k_s$ and constant $0\le c\le 1$. We proceed by induction. For $p=1$, 
\[
	\Big\|\prod_{i=1}^sA_{1, k_i}(c)\Big\|= \Bigg\| \begin{bmatrix}
		1 & c\,\log \,\prod\limits_{i=1}^s\frac{k_i}{k_i-1}   \\
		0   &   1
	\end{bmatrix} \Bigg\| 
	\le  1+ c\log \,\prod\limits_{i=1}^s\frac{k_i}{k_i-1} 
	\le  1+ \log \,\frac{k_s}{k_1-1}\, ,
\]
%
	as desired. For the inductive step, assume the conclusion of the lemma holds for 
	a certain value of $p$, $1\le p<m$. Then,
	%
	%
	%
	%
	\begin{align*}
		\Big\|\prod_{i=1}^s  A_{p+1, k_i}(c)& \Big\| 
		= \ 
		\Bigg\|\begin{bmatrix}
			\prod\limits_{i=1}^sA_{p, k_i}(c) & c\sum\limits_{i=1}^s \Big(\log\,\cfrac{k_i}{k_i-1}\Big) \prod\limits_{1\le j\neq i}^s  A_{p, k_j}(c) \\
			0   &   \prod\limits_{i=1}^sA_{p, k_i}(c)
		\end{bmatrix} \Bigg\| \\ 
		&\le \  
		\Big\|\prod_{i=1}^sA_{p, k_i}(c)\Big\|
		+c\, \Big\|\sum\limits_{i=1}^s\Big(\log\, \frac{k_i}{k_i-1}\Big) \prod\limits_{1\le j\neq i}^s  A_{p, k_j}(c)\Big\| \\
		& \le  \
		2^{p-1}\Big[1+\log^p\, \frac{k_s}{k_1-1} \Big]+2^{p-1}\sum_{i=1}^s\Big(\log\,\cfrac{k_i}{k_i-1}\Big)\Big[1+\log^p \,\frac{k_s}{k_1-1}\Big]
		\\
		&\le  \
		2^{p-1}\Big[1+\log^p \,\frac{k_s}{k_1-1} \Big]+2^{p-1}\Big(\log\,\cfrac{k_s}{k_1-1}\Big)\Big[1+\log^p \,\frac{k_s}{k_1-1}\Big]
		\\
		& =  \
		2^{p-1}\Big[1+\log \,\frac{k_s}{k_1-1} +
		\log^p\, \frac{k_s}{k_1-1}+\log^{p+1} \,\frac{k_s}{k_1-1} \Big]
		\\
		&\le  \ 2^{p}\Big[1+ \log^{p+1} \,\frac{k_s}{k_1-1}\Big],
	\end{align*}
	which concludes the proof of Lemma \ref{AA}.
\end{proof}


  Now,
define $Q_{p, 1}, Q_{p, 2}: 
\ell^2\big(\mathbb{C}^{2^{p-1}}\big)\to \ell^2\big(\mathbb{C}^{2^{p-1}}\oplus\mathbb{C}^{2^{p-1}}\big)$
by
\[
Q_{p, 1}:   \{a_i\} \mapsto      
\Big\{\begin{bmatrix}
	a_i \\ 0
\end{bmatrix}\Big\}, \hspace{0.3cm } 
Q_{p, 2}:   \{a_i\} \mapsto      \Big\{\begin{bmatrix}
	0 \\ a_i
\end{bmatrix}\Big\}.
\]
Then 
$Q_{p, 1}^*, Q_{p, 2}^*:  
\ell^2\big(\mathbb{C}^{2^{p-1}}\oplus \mathbb{C}^{2^{p-1}}\big)\to \ell^2\big(\mathbb{C}^{2^{p-1}}\big)$ 
 act as 
\[
	Q_{p, 1}^*: 
\Big\{\begin{bmatrix}
	a_i \\ b_i
\end{bmatrix}\Big\} \mapsto \{a_i\},\hspace{0.3cm } 
Q_{p, 2}^*: 
\Big\{\begin{bmatrix}
	a_i \\ b_i
\end{bmatrix}\Big\} \mapsto \{b_i\}. 
\]
Further, for any $n\ge 1$ and $0\le p\le m$,   define $W_{p+1, n}(c):\ell^2\big(\mathbb{C}^{2^p}\big)\to\ell^2\big(\mathbb{C}^{2^p}\big)$  
by 
\begin{equation} \label{backW}
W_{p+1, n}(c)(x_1, x_2,  \dots) = 
\big(B_{p, n, 1}(c)x_{n+1}, B_{p, n, 2}(c)x_{n+2}, \dots\big).     
\end{equation}
Then~\eqref{variemaiii} yields
%
%
\begin{equation}\label{eq:T-power-n}
	T^n_{m, c} 
	=\ Q_{m, 1}T^n_{m-1, c}Q_{m, 1}^*+Q_{m, 2}T^n_{m-1, c}Q_{m, 2}^*+c   Q_{m, 1}W_{m, n}(c)Q_{m, 2}^*.
\end{equation}

\begin{lemma} \label{fuller}
$T_{m, c}$ is absolutely Ces\`aro bounded  for any  $c\ge 0.$ Also, for fixed $\epsilon>0$ and $m$, 
$T_{m, c}$ has absolute Ces\'aro constant at most $1+\epsilon$ for any $c$ sufficiently small.
\end{lemma}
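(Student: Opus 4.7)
The plan is to show that $T^n_{m,c}$ differs from the unweighted backward shift $T^n_{m,0}$ (a contraction) by an operator whose absolute Ces\`aro average is $O(c)$, then invoke M\"uller's inequality \eqref{mullereq}. The algebraic starting point is an observation: a short induction on $p$ via \eqref{adef} yields
\[
A_{p,k}(c) = I_{2^p} + c\, L_k\, V_p, \qquad L_k := \log\tfrac{k}{k-1},
\]
where $V_p$ is a \emph{fixed} (independent of $k$ and $c$) strictly upper triangular matrix with $V_0 = 0$ and $V_{p+1} = \left(\begin{smallmatrix} V_p & I_{2^p} \\ 0 & V_p\end{smallmatrix}\right)$. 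A second induction, based on $V_{p+1}^n = \left(\begin{smallmatrix} V_p^n & n V_p^{n-1} \\ 0 & V_p^n\end{smallmatrix}\right)$, gives $V_p^{p+1} = 0$.

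Because every $A_{m,k+j}(c)$ is a polynomial in the single nilpotent matrix $V_m$, the factors of the product $P_{n,j}(c) := \prod_{k=2}^{n+1} A_{m,k+j}(c)$ pairwise commute and the expansion terminates after at most $m+1$ terms:
\[
P_{n,j}(c) = \sum_{\ell=0}^{m} c^\ell V_m^\ell\, e_\ell\bigl(L_{j+2},\ldots,L_{j+n+1}\bigr),
\]
where $e_\ell$ is the $\ell$-th elementary symmetric polynomial. Since $e_\ell(\cdot) \le L^\ell/\ell!$ with $L := \sum_{k=2}^{n+1} L_{k+j} = \log\tfrac{n+j+1}{j+1}$, and since $\log^m t \le C_{m,a}\, t^a$ for any $a>0$, one obtains the key perturbative bound (valid, say, for $0 \le c \le 1$):
\[
\bigl\|P_{n,j}(c) - I\bigr\| \le c\, C_{m,a}\Bigl(\tfrac{n+j+1}{j+1}\Bigr)^{a}.
\]

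Now $T_{m,0}$ is the unweighted backward shift on $\ell^2(\C^{2^m})$, hence $\|T^n_{m,0}h\| \le \|h\|$. Since the difference $T^n_{m,c}h - T^n_{m,0}h$ is, blockwise, the sequence with $j$-th entry $(P_{n,j}(c)-I)h_{n+j+1}$, one obtains (after reindexing $i = j+1$)
\[
\bigl\|T^n_{m,c}h - T^n_{m,0}h\bigr\|^2 \le c^2 C_{m,a}^2 \sum_{i\ge 1}\Bigl(\tfrac{n+i}{i}\Bigr)^{2a} \|h_{n+i}\|^2.
\]
Choosing $a \in (0,1/4)$ and setting $b_i := \|h_i\|$, the right-hand side is exactly what \eqref{mullereq} controls; averaging over $n$ and applying M\"uller's inequality yields
\[
\frac{1}{N}\sum_{n=1}^N \|T^n_{m,c}h\| \le \|h\| + c\, C_{m,a}\cdot C(a)\, \|h\| = \bigl(1 + c\, D(m)\bigr)\|h\|.
\]
This proves both parts: for arbitrary $c \ge 0$ the same expansion (with $c^\ell \le \max(1,c^m)$) still gives a finite constant, and for fixed $\epsilon > 0$ taking $c < \epsilon/D(m)$ forces the Ces\`aro constant below $1+\epsilon$.

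The main technical obstacle is the sharper perturbative bound $\|P_{n,j}(c) - I\| = O(c)$, as opposed to the norm bound on $P_{n,j}(c)$ itself which is all that Lemma~\ref{AA} provides. Factoring a $c$ out front is essential if the Ces\`aro constant is to approach $1$ as $c\to 0$, and this is possible precisely because all the $A_{m,k}(c)$ are polynomials in the \emph{same} nilpotent matrix $V_m$: the product reduces to a finite sum in $V_m^\ell$ whose $\ell=0$ term is exactly $I$ and whose remaining terms carry at least one power of $c$.
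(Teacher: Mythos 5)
Your proof is correct, and while it follows the same overall strategy as the paper --- write $T^n_{m,c}$ as the unweighted shift power plus a perturbation carrying an explicit factor of $c$, then control the perturbation's Ces\`aro averages via M\"uller's inequality \eqref{mullereq} --- the way you organize the perturbation is genuinely different and arguably cleaner. The paper works with the recursive block decomposition \eqref{eq:T-power-n}--\eqref{decompult}, introducing the operators $Q_{p,i}$, $W_{p+1,n}(c)$ and the blocks $B_{p,n,j}(c)$, and bounding products of the weights through the inductive Lemma~\ref{AA}. You instead observe that $A_{p,k}(c)=I+c\,L_k V_p$ for a single nilpotent matrix $V_p$ with $V_p^{p+1}=0$, so that all the weights commute and the product $\prod_k A_{m,k+j}(c)$ collapses to a finite sum $\sum_{\ell=0}^m c^\ell e_\ell(\cdot)V_m^\ell$ whose $\ell=0$ term is exactly $I$; the bound $e_\ell\le L^\ell/\ell!$ with the telescoping sum $L=\log\frac{n+j+1}{j+1}$, followed by $\log^\ell t\lesssim t^a$, then reproduces exactly the quantity that \eqref{mullereq} controls (your choice $a\in(0,1/4)$ is within the admissible range $a<1/2$). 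What your route buys is that the $O(c)$ factor in front of the correction, which the paper extracts by tracking $c$ through the block recursion, falls out immediately from the expansion; what it gives up is the explicit identification of the blocks $B_{p,n,j}(c)$, which the paper reuses later (e.g.\ in \eqref{power} and Lemma~\ref{anotherprelimyes}), so the paper's machinery is not redundant in context. Both arguments yield an absolute Ces\`aro constant of the form $1+cD(m)$ and hence both parts of the lemma.
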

 \begin{proof} 
For $c=0$, the first assertion is trivial. Fix $\epsilon>0$ and let $c>0.$   
Observe that Lemma \ref{AA} yields, for any $0\le p \le m$ and $0< c\le 1,$ 
\begin{align}
\|B_{p, n, j+1}(c)\|&\le  \sum\limits_{k=2+j}^{n+j+1}\log\Big(\frac{k}{k-1}\Big)2^{p-1}
\Big[1+\log^p  \Big(\frac{n+j+1}{j+1}\Big)\Big]         \notag \\
&\le 2^p\Big[1+\log^{p+1} \Big(\frac{n+j+1}{j+1}\Big) \Big].     \hspace {0.7 cm} \forall n\ge 1, j\ge 0. \label{coeffester}
\end{align}
%

 \noindent 
For a multi-index $I=(i_1, \dots, i_{k-1})\in \{1,2\}^{k-1}$ and $i_k\in \{1,2\}$, we set 
\[
Q_{m, I,i_k}=Q_{m, (i_1, \dots, i_{k})}:=Q_{m, i_1} Q_{m-1, i_2}\cdots Q_{m-k+1, i_k}. 
\] 
%
%
Applying \eqref{eq:T-power-n}, we can write, inductively, for any $n\ge 1$ and $c>0,$
\begin{align}\label{decompult}
T^n_{m, c} = &\ Q_{m, 1}T^n_{m-1, c}Q^*_{m, 1}+Q_{m, 2}T^n_{m-1, c}Q^*_{m, 2}+c   Q_{m, 1}W_{m, n}(c)Q^*_{m, 2}  \notag      \\ 
=&\sum_{i_1, i_2\in \{1, 2\}} Q_{m, (i_1,  i_2)}T^n_{m-2, c}Q^*_{m, (i_1, i_2)} \notag    \\ &\qquad +c\sum_{i_1\in\{1, 2\}}Q_{m, i_1, 1}W_{m-1, n}(c)Q^*_{m, i_1, 2} +c   Q_{m, 1}W_{m, n}(c)Q^*_{m, 2}  \notag   \\
=&\ \dots \notag   \\
=&\sum_{I\in\{1, 2\}^m}Q_{m, I}T^n_{0, c} Q^*_{m, I}
+c\sum_{\ell=1}^{m-1}\sum_{I\in\{1, 2\}^\ell}Q_{m, I,1} W_{m-\ell,n}(c) Q^*_{m, I,2}  +c   Q_{m, 1}W_{m, n}(c)Q^*_{m, 2}   \notag   \\ 
=&S^n
+c\sum_{\ell=1}^{m-1}\sum_{I\in\{1, 2\}^\ell}Q_{m, I, 1} W_{m-\ell,n}(c) Q^*_{m, I, 2}  +c   Q_{m, 1}W_{m, n}(c)Q^*_{m, 2}\, ,   
\end{align}
%
%
%
where $S$ is simply the usual (unweighted) backwards shift on $\ell^2(\mathbb{C}^{2^m})$ (and does not depend on $c$). In view of \eqref{decompult} and since $S$ has absolute Ces\'aro constant equal to $1,$ to show that $T_{m,c}$ has absolute Ces\'aro constant at most $1+\epsilon$ for $c>0$ sufficiently small, it suffices to find $K=K(m)>0$  such that 
\begin{equation} \label{auxiliary}
\sup_{N\in\mathbb{N}}\frac{1}{N}\sum_{n=1}^N\|W_{p+1, n}(c)b\|\le K\|b\|,
\end{equation}
for any $0\le p< m, 0<c\le 1$ and $b\in\ell^2(\mathbb{C}^{2^{p}})$.
But this follows from \eqref{mullereq} and \eqref{coeffester}. Indeed, taking e.g. $a=1/4$ in the former yields, for any $0\le p< m, 0<c\le 1$,  
%
%
\begin{align}
 & \sup_{N\in\mathbb{N}}\frac{1}{N}\sum_{n=1}^N\|W_{p+1, n}(c)b\|                       \notag \\ 
 =&\sup_{N\in\mathbb{N}}\frac{1}{N}
   \sum_{n=1}^{N}\sqrt{\sum_{j=1} ^{\infty}\Big\|B_{p, n, j}b_{n+j}\Big\|^2 }            \notag  \\  
\le&\sup_{N\in\mathbb{N}}\frac{1}{N}\sum_{n=1}^{N}\sqrt{\sum_{j=1} ^{\infty}  2^{2p}\Big[1+\log^{p+1} \frac{n+j}{j}\Big]^2     \|b_{n+j}\|^2} \notag \\
\le&\sup_{N\in\mathbb{N}}\frac{1}{N}\sum_{n=1}^{N}\sqrt{\sum_{j=1} ^{\infty}  2^{2p+1}\Big[1+ \log^{2(p+1)} \frac{n+j}{j} \Big]    \|b_{n+j}\|^2}  \notag \\
\le&\sup_{N\in\mathbb{N}}\frac{1}{N}\sum_{n=1}^{N}\sqrt{\sum_{j=1} ^{\infty}  2^{2p+1}  \|b_{n+j}\|^2}
\,+ \,  \sup_{N\in\mathbb{N}}\frac{1}{N}\sum_{n=1}^{N}\sqrt{\sum_{j=1} ^{\infty}  2^{2p+1} \Big(\log^{2(p+1)} \frac{n+j}{j}    \Big) \;\|b_{n+j}\|^2}                           \notag \\ 
\le &\ 
2^{p+1/2}\|b\|+
\sup_{N\in\mathbb{N}}\frac{2^{p+1/2}}{N}\sum_{n=1}^{N}\sqrt{\sum_{j=1} ^{\infty}  (4p+4)^{2(p+1)}
	\Big(\cfrac{n+j}{j}\Big)^{1/2}\,
	 \|b_{n+j}\|^2} 
          \notag        \\ 
\le &\
2^{p+1/2}\|b\|+
\sup_{N\in\mathbb{N}}\frac{2^{3p+5/2}(p+1)^{p+1}}{N}\sum_{n=1}^{N}\sqrt{\sum_{j=1} ^{\infty}  \Big(\cfrac{n+j}{j}\Big)^{1/2}    \|b_{n+j}\|^2} 
    \notag   \\ 
\le &\ 
2^{p+1/2}\|b\|+2^{3p+5/2}(p+1)^{p+1} \sqrt{6}\|b\| 
    \notag \\ 
\le  &\ (2^{m-1/2}+2^{3m-1/2}m^m\sqrt{6})\|b\|.\label{variemai}
\end{align}
Here we have applied the inequality 
\[
\log^{2(p+1)}\,\cfrac{n+j}j
\le (4p+4)^{2(p+1)}\Big(\cfrac{n+j}j\Big)^{1/2}, 
\]
which follows from an easy inequality $\log x\le x$ ($x>0$) by putting $x=\Big(\cfrac{n+j}j\Big)^{1/(4p+4)}$. 
Thus, \eqref{auxiliary} holds with \[
K=2^{m-1/2}+2^{3m-1/2}m^m\sqrt{6}.
\]   
Finally, observe that in \eqref{decompult}, 
\[
\big\|\sum_{I\in\{1, 2\}^\ell} 
Q_{m, I,1} W_{m-\ell,n}(c) Q^*_{m, I,2}b \big\|
\le \|W_{m-\ell,n}(c)b\|
 \]
for all $\ell$ and $b$.  
Hence, choosing $c\le \epsilon /(Km)$ and employing \eqref{decompult}-\eqref{auxiliary} allows us to write 
\begin{align*}
 \frac{1}{N}\sum_{n=1}^N\|T^n_{m, c}x\| 
& \le \  \frac{1}{N}\sum_{n=1}^N\|S^nx\|+\frac{c}{N}\sum_{n=1}^N\sum_{\ell=0}^m\|W_{m-\ell, n}(c) x\| \\
&\le  \ \|x\|+mcK\|x\| 
 \le  \  (1+\epsilon)\|x\|,
\end{align*}
and so $T_{m,c}$ has absolute Ces\'aro constant at most $1+\epsilon$ in this case. 
This concludes the proof of Lemma \ref{fuller}.
 \end{proof}

We are now ready to prove Theorem \ref{cesaromain}.
%
%
\begin{proof}[Proof of Theorem \ref{cesaromain}]
Although statement (ii) is stronger than (i), we begin with~(i). 
\medskip 

\noindent \underline{Proof of part (i).}  
Fix $m\ge 1$ and let $\epsilon>0.$ 
First, we recall from \eqref{variemaiii} that the $2^m\times 2^m$ matrix $\prod_{i=2}^NA_{m, i}(c)$, when viewed as a $(2\times 2)$-block matrix acting on $\ell^2\big(2^{m-1}\big)\oplus \ell^2\big(2^{m-1}\big)$, has $(1, 2)$ entry equal to 
\[
cB_{m-1, N-1, 1}(c)
=c\sum_{i=2}^N\Big(\log\frac{i}{i-1}\Big)\prod_{2\le j\neq i}^NA_{m-1, j}(c).\]
This last matrix can, in turn, be viewed as  a $(2\times 2)$-block matrix acting on $\ell^2\big(2^{m-2}\big)\oplus \ell^2\big(2^{m-2}\big)$, in which case its $(1, 2)$ entry (and thus the $(1, 2^2)$ entry of $\prod_{i=2}^NA_{m, i}(c)$) is given by
\[c^2\sum_{i=2}^N\log\bigg(\frac{i}{i-1}\bigg)\sum_{2\le j\neq i}^N\Big(\log\frac{i}{i-1}\Big)\prod_{2\le k\neq i, j}^NA_{m-2, k}(c).\]
Continuing inductively, we obtain that the $(1, 2^m)$ entry of 
$\prod_{i=2}^NA_{m, i}(c)$ is given by 
	\begin{align} 
	& c^m\sum_{i_1=2}^N\Big(\log\frac{i_1}{i_1-1}\Big)\sum_{2\le i_2\neq i_1}^N\Big(\log\frac{i_2}{i_2-1}\Big)\cdots \sum_{\substack{2\le i_{m}, \\ i_{m}\neq i_1, \dots, i_{m-1}}}^N \Big(\log\frac{i_{m}}{i_{m}-1}\Big) \nonumber     \\  
	 \ge \  & c^m \log^m \,\frac{N}{m}\, . \label{power} 
	 \end{align}
Here we have used that, independently of $i_1,\dots, i_{m-1}$, 
the last 
sum is greater than or equal to $\log\frac Nm$.  One then applies the same estimate successively to 
the rest of the sums, going from right to left.

Now, in view of the proof of Lemma \ref{fuller}, we can choose 
\begin{equation}\label{frla-c}
c=c(\epsilon,m)=\cfrac{\epsilon}{m\big(2^{m-1/2}+2^{3m-1/2}m^m\sqrt{6}\big)}, 
\end{equation}
so that $T_{m, c}$ has absolute Ces\'aro constant at most $1+\epsilon.$ Further, let $\{\eta_i\}$ and $\{e_i\}$ denote the usual bases of $\ell^2$ and $\mathbb{C}^{2^m}$, respectively. For $N$ fixed,  define $P=P_N: \ell^2\big(\mathbb{C}^{2^m}\big)\to \ell^2\big(\mathbb{C}^{2^m}\big)$ to be the projection onto $V=V_N:=\text{span}_{1\le i\le N}\{\eta_i\}\otimes \mathbb{C}^{2^m}$.  Consider the operator 
\begin{equation} \label{xdef}
   X_{m, N}(c):=PT_{m, c}\big|_V=T_{m, c}\big|_V. 
\end{equation}
Applying $X^{N-1}_{m, N}(c)$ to $\eta_N\otimes e_{2^m} $ yields, in view of \eqref{power},
\begin{align*}
  \|X^{N-1}_{m, N}(c)(\eta_N\otimes e_{2^m})\|&=\|T^{N-1}_{m, c}(\eta_N\otimes e_{2^m})\| \\
  &=\Big\|\eta_1\otimes \bigg(\prod_{i=2}^{N}A_{m, i}(c)\bigg)e_{2^m}\Big\| \\
  &\ge  c^m \log^m\bigg(\frac{N}{m}\bigg).
\end{align*}
Thus, $X_{m, N}(c)$ is an operator acting on the $N'=2^mN$-dimensional space $V_N$ and such that, for 
$N'\ge m^22^{2m},$
%
%
%
%
%
%
\begin{equation}\label{penult}
	\big\|X_{m, N}^{N-1}(c)\big\|\ge c^m\log^m\frac{N}{m} 
	=c^m\log^m\,\frac{N'}{m2^m} 
	\ge \cfrac{c^m}{2^m} \,\log^m N'= C(\eps,m)\,\log^m N',    	
\end{equation}
where $C(\eps, m)=2^{-m}c(\eps, m)^m$. 
Finally,  since 
$PT_{m, c}\big|_V=T_{m, c}\big|_V$, 
we obtain, for any $x\in V$ and $n\ge 1,$
\[\big\|X_{m, N}^n(c)x\big\|=\|T^n_{m, c}x\|, \]
which yields that $X_{m, N}(c)$ has absolute Ces\'aro constant at most $1+\epsilon$ as well. Combining this observation with \eqref{penult}, we obtain that  \begin{equation}\label{P-AC-log-m}
P_{AC}(N', 1+\epsilon)\ge C(\epsilon, m) \log^m (N'),
\end{equation} 
for all 
$N'\ge m^2 2^{2m}$ such that $N'$ is also a multiple of $2^m$. Finally, we will show that
\begin{equation}\label{P-AC-log-m-redux}
	P_{AC}(N'', 1+\epsilon)\ge 2^{-m}C(\epsilon, m) \log^m (N''),
\end{equation} 
for all $N''\ge m^2 2^{2m}$.
Indeed, let $N''$ satisfy $N''\ge m^2 2^{2m},$ with $N'$ the largest integer such that $N'\le N''$ and $N'$ is a multiple of $2^m$. Since $P_{AC}(N'', K)$ is increasing with respect to $N''$ (assuming $K$ fixed), \eqref{P-AC-log-m} allows us to write
\begin{align*}
 P_{AC}(N'', 1+\eps)&\ge P_{AC}(N', 1+\eps) \\
 &\ge  C(\eps, m) \log^m (N') \\
 &\ge  C(\eps, m) \log^m (N''/2) \\ 
 &\ge  2^{-m}C(\eps, m) \log^m (N''),
\end{align*}
as desired. \eqref{P-AC-log-m-redux} is now easily seen to imply assertion (i).  \\

%
\noindent \underline{Proof of part (ii).}
First we notice that 
\begin{equation}\label{C-eps-mew}
2^{-m}C(\eps, m)=\frac{c(\eps, m)^m}{4^m}
=\frac{\eps^m}{\kappa_m},  
\end{equation}
where 
\[
\kappa_m=
(4m)^m[2^{m-1/2}+2^{3m-1/2}m^m\sqrt{6}]^m. 
\]
Clearly,
\begin{equation}\label{est-kappa}
\kappa_m\le (4m)^m\,(2\cdot 2^{3m}m^m)^m
=2^{3m^2+3m}m^{m^2+m}=(8m)^{m^2+m}.  
\end{equation}
Fix some $N\in \N$ and $\eps\in(0, \eps_0)$.  
Then, by \eqref{P-AC-log-m-redux} and \eqref{C-eps-mew}, 
\[
\log P_{AC}(N, 1+\epsilon)\ge m\log( \eps \log N)
-(m^2+m)\log(8m). 
\]
 Set $m=\lfloor \frac v{2\log v}\rfloor$, where $v=\log(\eps \log N)$. This choice of $m$ is, in a sense, close to the value that maximizes this last lower bound. Let us check that $N\ge m^22^{2m}$, which 
will allow us to apply~\eqref{P-AC-log-m-redux}.

%
%
 
We are assuming that $\eps \log N\ge M$, where $M$ is sufficiently large so that
\[
m^22^{2m} < e^{2m} \le \exp \frac v{\log v}. 
\]
Denoting by $\log_{(k)}$ the $k$th iterated logarithm, we observe that 
\[
\frac v{\log v}\le \frac{\log_{(2)} N}{\log_{(3)} N}\le \log N, 
\]
because $v\in [\log M,\log_{(2)} N]$ and the function $\frac x{\log x}$ increases on this interval, if $M$ is assumed sufficiently large. 
Combining the above inequalities, we conclude that $m^22^{2m}\le N$ whenever $\eps \log N\ge M$ and $M$ is sufficiently large.  

Thus, we may apply ~\eqref{P-AC-log-m-redux}.  Choose a constant $\alpha\in(0,\frac 14)$; in view of ~\eqref{C-eps-mew} 
and~\eqref{est-kappa}, we get 
\begin{align*}
\log P_{AC}(N, 1+\epsilon)
&\ge m\log(\eps \log N)-(m^2+m)\log(8m)        \notag\\
&\ge \Big(\frac v{2\log v}-1\Big)v-
\Big(
\frac{v^2}{4\log^2v}+\frac{v}{2\log v}
\Big)\,\log v                          \notag    \\
& \ge \frac{\alpha v^2}{\log v}         \notag         \\
& =\cfrac{\alpha\log^2\log N^{\eps}}{\log_{(3)}N^{\eps}},
\end{align*}
if $v\ge \log M$   and $M$ is sufficiently large. 
This yields assertion (ii)  of Theorem~\ref{cesaromain}.
\end{proof}
%
%
%
 
\subsection{Lower bounds for Kreiss conditions} \label{kreissssubsection} Our work in subsection \ref{cesarosubsection} can now be adapted to yield various Kreiss-type bounds.  
\begin{theorem} \label{uniformthm}
  Fix $\epsilon>0$ and $m\in\mathbb{N}$. Then, there exists a constant $C=C(\epsilon, m)>0$ such that for all $N\ge 1,$
\begin{equation} \label{mainU}
    P_U(N, 1+\epsilon)\ge C \log^m (N).
\end{equation}
\end{theorem}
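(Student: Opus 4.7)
The plan is to run the argument of Theorem~\ref{cesaromain}(i) almost verbatim, using the same family of matrices $X_{m, N}(c) = T_{m,c}\big|_{V_N}$ from~\eqref{xdef}. The power growth estimate~\eqref{penult} remains valid, so the only thing to verify is that, for $c = c(\eps, m)$ chosen sufficiently small, $X_{m,N}(c)$ satisfies the uniform Kreiss condition with constant at most $1+\eps$ (rather than merely being absolutely Ces\`aro bounded).

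The key new ingredient is the \emph{rotation invariance} of the matrix-weighted shift $T_{m,c}$. One checks directly that for every $|\lambda|=1$, the block-diagonal unitary $U_\lambda$ on $\ell^2(\C^{2^m})$ given by $(U_\lambda h)_k := \lambda^{-(k-1)} h_k$ satisfies $U_\lambda T_{m,c} U_\lambda^{-1} = \lambda T_{m,c}$. Since $V_N$ is preserved by $U_\lambda$, the restriction $X_{m,N}(c)$ inherits this intertwining, so that $\|M_n(\lambda X_{m,N}(c))\| = \|M_n(X_{m,N}(c))\|$ for all $|\lambda|=1$ and $n\ge 1$. By the characterization~\eqref{unkreisscharact}, verifying the uniform Kreiss condition for $X_{m,N}(c)$ therefore reduces to a bound on its Ces\`aro constant.

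The latter is immediate from the absolute Ces\`aro bound already established in Lemma~\ref{fuller}. Indeed, for any $x$ and $n\ge 1$,
\[
\|M_n(X_{m,N}(c))x\| \;\le\; \frac{\|x\|}{n+1} + \frac{n}{n+1}\cdot \frac{1}{n}\sum_{k=1}^n \|X_{m,N}^k(c)x\|,
\]
so the inequality $K_{AC}(T_{m,c}) \le 1+\eps$ from Lemma~\ref{fuller} (combined with $K_{AC}\ge 1$) gives $\|M_n(X_{m,N}(c))\|\le 1+\eps$ for the choice of $c$ in \eqref{frla-c}. Combining with the power-growth lower bound~\eqref{penult} and then extending from multiples of $2^m$ to arbitrary $N$ exactly as in~\eqref{P-AC-log-m-redux} delivers~\eqref{mainU}.

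The main subtlety is the tracking of constants in the equivalence~\eqref{unkreisscharact}: passing between a Ces\`aro bound and a uniform Kreiss bound may introduce a universal multiplicative factor $C_0\ge 1$, so a $(1+\eps)$-Ces\`aro bound does not automatically yield a $(1+\eps)$-uniform-Kreiss bound. This is circumvented by taking $c$ slightly smaller than the value in~\eqref{frla-c}, forcing the Ces\`aro constant of $T_{m,c}$ below $(1+\eps)/C_0$; the resulting uniform Kreiss constant is then at most $1+\eps$. Since the power bound scales as $c^m \log^m N$, this modification only changes the numerical constant $C(\eps,m)$ while preserving the $\log^m N$ scaling.
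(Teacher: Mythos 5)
Your route is genuinely different from the paper's. The paper never passes through the Ces\`aro means of $T_{m,c}$ itself: it introduces auxiliary shifts $\widetilde T_{p+1,c}$ with weights $\widetilde A_{p+1,k}(c)$, shows these are absolutely Ces\`aro bounded with a constant depending only on $m$, deduces a uniform-Kreiss-type bound $\big\|\sum_{k=0}^N z^{-k-1}W_{p+1,k}(c)\big\|\le C_0/(|z|-1)$ for the residual blocks (Lemma \ref{anotherprelimyes}), and then feeds this into the decomposition \eqref{decompult}, where the main term $S$ has uniform Kreiss constant exactly $1$ and the residuals carry the factor $c$. Your idea---exploit the rotation covariance $U_\lambda T_{m,c}U_\lambda^{-1}=\lambda T_{m,c}$ of the matrix-weighted shift (which is correct, since the $U_\lambda$ act by scalars on each block and $V_N$ is $U_\lambda$-invariant) to reduce the uniform Kreiss condition to a plain Ces\`aro-mean bound, and get the latter from Lemma \ref{fuller}---is attractive and avoids Lemma \ref{anotherprelimyes} entirely. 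The estimate $\sup_{n\ge 1}\|M_n(X_{m,N}(c))\|\le 1+\eps$ that you derive from the absolute Ces\`aro bound is also correct.

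However, the last step as written has a genuine gap, and your proposed repair does not work. You concede that the equivalence \eqref{unkreisscharact} may convert a bound $\sup_{n,|\lambda|=1}\|M_n(\lambda T)\|\le C$ into a uniform Kreiss constant $C_0\,C$ with some universal $C_0\ge 1$, and you propose to absorb $C_0$ by shrinking $c$ until the Ces\`aro constant drops below $(1+\eps)/C_0$. If $C_0>1$ and $\eps<C_0-1$, this target is strictly less than $1$, which is unattainable: the Ces\`aro constant of $X_{m,N}(c)$ cannot be driven below $1$ by any choice of $c$, since by \eqref{decompult} one has $M_n(X_{m,N}(c))=M_n(S_N\otimes I)+c(\cdots)$ and $\sup_n\|M_n(S_N\otimes I)\|\to 1$ as $N\to\infty$ (test on $\tfrac{1}{\sqrt M}\sum_{j\le M}\eta_j\otimes e$ with $M\gg n$). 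So shrinking $c$ only changes the power-growth constant, not the obstruction. The gap is fixable, but you must prove that the implication you need holds with multiplicative constant exactly $1$: writing $z=\rho\lambda$ with $\rho>1$, $|\lambda|=1$, Abel summation gives
\begin{equation*}
\sum_{k=0}^{n} z^{-k-1}T^k=\lambda^{-1}\Big[\sum_{k=0}^{n-1}(k+1)\big(\rho^{-k-1}-\rho^{-k-2}\big)M_k(\lambda^{-1}T)+(n+1)\rho^{-n-1}M_n(\lambda^{-1}T)\Big],
\end{equation*}
and the nonnegative coefficients sum to $\sum_{k=0}^{n}\rho^{-k-1}\le(\rho-1)^{-1}$; hence $\sup_{n\ge 0,\,|\lambda|=1}\|M_n(\lambda T)\|\le C$ gives $K_U\le C$ directly (note you must include $n=0$, where $M_0=I$ has norm $1\le 1+\eps$, which the paper's characterization \eqref{unkreisscharact} omits). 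With this lemma in place of the black-box citation, your argument closes and yields \eqref{mainU} with the same $\log^m N$ scaling.
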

 \noindent The theorem will be a straightforward consequence of the following lemma, which shows that the residual $W_{p, k}(c)$ terms in \eqref{decompult} satisfy a uniform Kreiss-like condition. 
\begin{lemma}\label{anotherprelimyes}
Let $m\ge0$. Then, there exists $C_0>0$ such that, for any $0\le p\le m-1$ and $0\le c\le 1,$
\begin{equation}\label{uprelim}
\bigg\|\sum_{k=0}^Nz^{-k-1}W_{p+1, k}(c)\bigg\|\le \cfrac{C_0}{|z|-1},    
\end{equation}    
for all $N\ge 1$ and $|z|>1$.
\end{lemma}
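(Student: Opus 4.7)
The strategy is to realize $W_{p+1,n}(c)$ as an off-diagonal block of the $n$-th power of an auxiliary weighted backward shift which, unlike $T_{p+1,c}$ itself, will satisfy the uniform Kreiss condition with a constant independent of $c\in[0,1]$. Specifically, I would set
\[
\hat A_{p+1,k}(c):=\begin{bmatrix} A_{p,k}(c) & \log\big(\tfrac{k}{k-1}\big)\, I_{2^p} \\ 0 & A_{p,k}(c)\end{bmatrix}
\]
(this is $A_{p+1,k}(c)$ but with the off-diagonal scalar $c$ replaced by $1$) and let $\hat T_{p+1}(c)$ denote the corresponding weighted backward shift on $\ell^2(\C^{2^{p+1}})$. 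An easy induction on $p$ shows that the matrices $\{A_{p,r}(c)\}_r$ pairwise commute, hence so do the $\hat A_{p+1,k}(c)$; the same computation that produced~\eqref{variemaiii} then yields
\[
\prod_{k=2}^{n+1}\hat A_{p+1,k+j}(c)=\begin{bmatrix}\prod_{k=2}^{n+1}A_{p,k+j}(c) & B_{p,n,j+1}(c)\\ 0 & \prod_{k=2}^{n+1}A_{p,k+j}(c)\end{bmatrix},
\]
the crucial point being that the off-diagonal block is exactly $B_{p,n,j+1}(c)$, with \emph{no} factor of $c$ out front. In the notation of Section~\ref{cesarosubsection}, this translates to the identity
\[
W_{p+1,n}(c)=Q_{p+1,1}^{*}\,\hat T_{p+1}(c)^{n}\, Q_{p+1,2}.
\]

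The next step is to show that $\hat T_{p+1}(c)$ is absolutely Ces\`aro bounded with constant depending only on $m$, uniformly in $c\in[0,1]$. From the elementary estimate $\bigl\|\bigl[\begin{smallmatrix} A & B \\ 0 & A\end{smallmatrix}\bigr]\bigr\|\le \sqrt{2}\,(\|A\|+\|B\|)$ combined with Lemma~\ref{AA} and~\eqref{coeffester}, one gets
\[
\Big\|\prod_{k=2}^{n+1}\hat A_{p+1,k+j}(c)\Big\|\le C_m\Big[1+\log^{p+1}\tfrac{n+j+1}{j+1}\Big]
\]
uniformly in $c\in[0,1]$. Substituting this bound into $\frac{1}{N}\sum_{n=1}^{N}\|\hat T_{p+1}(c)^{n}h\|$ and then invoking Minkowski's inequality, the pointwise comparison $\log^{2(p+1)}x\le (4p+4)^{2(p+1)}\,x^{1/2}$ for $x\ge 1$, and the M\"uller-type inequality~\eqref{mullereq} with $a=1/4$---exactly as in the chain of inequalities~\eqref{variemai} inside the proof of Lemma~\ref{fuller}---produces a uniform bound $K_{AC}(\hat T_{p+1}(c))\le K(m)$. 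Since absolute Ces\`aro boundedness implies the uniform Kreiss condition with a comparable constant (Section~\ref{pre}), this yields a constant $C_0=C_0(m)$ with
\[
\Big\|\sum_{k=0}^{N}z^{-k-1}\hat T_{p+1}(c)^{k}\Big\|\le \frac{C_0}{|z|-1}
\]
for all $N\ge 1$, $|z|>1$ and $c\in[0,1]$; combining this with the identity $W_{p+1,n}(c)=Q_{p+1,1}^{*}\hat T_{p+1}(c)^{n}Q_{p+1,2}$ and the fact that $Q_{p+1,1}^{*}$ and $Q_{p+1,2}$ are contractions then delivers~\eqref{uprelim}.

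The main conceptual obstacle---and the reason for introducing the modified shift $\hat T_{p+1}(c)$ in the first place---is that the seemingly natural alternative of applying the identity $cW_{p+1,n}(c)=Q_{p+1,1}^{*}T_{p+1,c}^{n}Q_{p+1,2}$ from~\eqref{eq:T-power-n} together with the uniform Kreiss bound on $T_{p+1,c}$ introduces a $1/c$ factor on the right-hand side and so cannot yield anything uniform as $c\to 0^{+}$; indeed, at $c=0$ the operator $T_{p+1,0}$ is just the unweighted backward shift on $\ell^2(\C^{2^{p+1}})$, yet $W_{p+1,n}(0)$ is nonzero. Replacing the outermost $c$ by $1$ in the definition of the shift removes this degeneracy while preserving all the Ces\`aro estimates used in Lemma~\ref{fuller}, since those estimates depend only on bounds for $A_{p,k}(c)$ and $B_{p,n,j+1}(c)$ that already hold uniformly in $c\in[0,1]$.
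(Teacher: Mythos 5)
Your proposal is correct and follows essentially the same route as the paper: the paper likewise introduces the auxiliary shift $\widetilde{T}_{p+1,c}$ with weights $\widetilde{A}_{p+1,k}(c)$ (your $\hat A_{p+1,k}(c)$), shows via the Lemma~\ref{fuller} estimates that it is absolutely Ces\`aro bounded uniformly in $c\in[0,1]$, passes to the uniform Kreiss condition, and extracts $W_{p+1,n}(c)$ from the decomposition of $\widetilde{T}^n_{p+1,c}$. The only cosmetic difference is that you compress directly via $W_{p+1,n}(c)=Q_{p+1,1}^*\widetilde{T}^n_{p+1,c}Q_{p+1,2}$, while the paper isolates the $W$-term from the three-summand identity~\eqref{decomplessult} by the triangle inequality, ending with $C_0=3C$ instead of your slightly cleaner $C_0=C$.
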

\begin{proof}
Fix $m\in\mathbb{N}$ and $0\le p\le m-1.$ Recall the matrices $A_{p, k}(c),$ defined in \eqref{adef}. For any $0\le c\le 1$ and $k\ge 2,$ we further define \[\widetilde{A}_{p+1, k}(c)=\begin{bmatrix}
A_{p, k}(c) & \log \big(\frac{k}{k-1}\big)I_{2^p} \\
0 & A_{p, k}(c)
\end{bmatrix}\]
and  $\widetilde{T}_{p+1, c}: \ell^2\big(\mathbb{C}^{2^{p+1}}\big)\to \ell^2\big(\mathbb{C}^{2^{p+1}})$ to be the backwards weighted shift with weights $\big\{\widetilde{A}_{p+1, k}(c)\big\}_{k\ge 2}$.
It is easily seen that, for any $n\ge 1,$
\begin{equation}
\widetilde{T}^n_{p+1, c} 
=\ Q_{p+1, 1}T^n_{p, c}Q^*_{p+1, 1}+Q_{p+1, 2}T^n_{p, c}Q^*_{p+1, 2}+   Q_{p+1, 1}W_{p+1, n}(c)Q^*_{p+1, 2}. \label{decomplessult}
\end{equation}
In view of the proof of Lemma \ref{fuller}, there exists a constant $C_1>0$ only depending on $m$ such that $T_{0,c}, T_{1, c},\dots, T_{p, c}$ are absolutely Ces\'aro bounded with constant at most $C_1$, for any $0\le c\le 1.$
Combining with \eqref{decomplessult} and \eqref{variemai}, we get the existence of another constant $C_2>0$, again depending only on $m,$ such that $\widetilde{T}_{p+1, c}$ is absolutely Ces\'aro bounded with constant at most $C_2,$ for any $0\le c\le 1.$\par  Now, it is well-known that the uniform Kreiss constant of any operator is at most an (absolute) numerical constant times its absolute Ces\'aro constant (see e.g. the proofs of \cite[Corollary 3.2]{Mont} and \cite[Theorem 2]{Quaestiones}). In particular, we can find a constant $C>0$ only depending on $m$ such that 
$T_{0,c}, T_{1, c},\dots, T_{p, c}$ and $\widetilde{T}_{p+1, c}$ all satisfy the uniform Kreiss condition with constant at most $C$, for any $0\le c\le 1.$ As \eqref{decomplessult} yields
\begin{align*}
&\sum_{k=0}^Nz^{-k-1}\widetilde{T}^k_{p+1, c} \notag   \\
 = &    \sum_{k=0}^Nz^{-k-1}Q_{p+1, 1}T^k_{p, c}Q^*_{p+1, 1}  \notag \\ 
 & +\sum_{k=0}^Nz^{-k-1}Q_{p+1, 2}T^k_{p, c}Q^*_{p+1, 2}+\sum_{k=0}^Nz^{-k-1}Q_{p+1, 1}W_{p+1, k}(c)Q^*_{p+1, 2},
\end{align*}
for any $N\ge1 , |z|>1$, we may conclude that \eqref{uprelim} holds with $C_0=3C,$ as desired.
\end{proof}

 \begin{proof}[Proof of Theorem \ref{uniformthm}]
Fix $m\ge 1$ and $\epsilon>0$ and define $T_{m, c}$ as in Subsection \ref{cesarosubsection}, for any $c>0.$ In view of \eqref{decompult} and \eqref{uprelim} (and the basic fact that the unweighted backwards shift $S$ has uniform Kreiss constant equal to $1$), we deduce that, if $c$ is chosen sufficiently small, $T_{m, c}$ will have uniform Kreiss constant at most $1+\epsilon.$ Now, for any $N>m$, define $P=P_N$ and $X_{m, N}(c)$ as in \eqref{xdef}. Since $PT_{m, c}P=T_{m, c}P,$ we have 
\[\sum_{k=0}^Nz^{-k-1}\big(X_{m, N}(c)\big)^k=P\bigg(\sum_{k=0}^Nz^{-k-1}T_{m, c}^k\bigg)P ,\]
for any $N\ge 1$ and $|z|>1, $ and so $X_{m, N}(c)$ has uniform Kreiss constant at most $1+\epsilon$ as well. \eqref{mainU} now follows readily from \eqref{penult}.
\end{proof}

\begin{remark}
Since $P \ge P_{U},$ \eqref{main} follows immediately from Theorem \ref{uniformthm}.
\end{remark}

\begin{remark}\label{poweronly}
Consider the direct sum
\[Y_{m, c}:=\oplus_{N}X_{m, N}(c).\]
Since direct sums respect (uniform) Kreiss constants, we obtain that, for $c$ sufficiently small, $Y_{m, c}$ has uniform Kreiss constant at most $1+\epsilon$ and satisfies 
\[
\|Y^n_{m, c}\|\gtrsim \log^m (n). 
\]
\end{remark}
\begin{remark} \label{further2}
Using an analogous line of reasoning, it is possible to obtain a lower bound for $P_S$ as well. Indeed, we first recall that one of the key ingredients in the proofs of Theorems \ref{cesaromain} and \ref{uniformthm} is [\cite{AbsolCesaromuller},Theorem 2.1], which  asserts that the weighted backwards shift on $\ell^2$ with weights $w_k=\big(\frac{k}{k-1}\big)^a, a>0,$ is absolutely Ces\'aro bounded. To treat the case of the strong Kreiss condition, one instead turns to \cite[Proposition 4.9]{CohenCunyEisnerLin}; this result shows that, if we choose $w_k=[\log(k+2)/\log(k+1)]^a$, then the associated weighted backwards shift satisfies the strong Kreiss condition, for any $a>0.$ Now, given $m\ge 1$ and $\epsilon>0,$ one can define inductively, for any $c>0$ and $k>2$, $\widetilde{A}_{0, k}(c)=1$ and 
\[\widetilde{A}_{p+1, k}(c) = 
\begin{bmatrix}
\widetilde{A}_{p, k}(c) & c \log\big(\frac{\log(k+2)}{\log 2}\big) I_{2^p} \\
0 & \widetilde{A}_{p, k}(c)
\end{bmatrix}.\]
The proofs of Theorems \ref{cesaromain} and \ref{mainU} can then be repeated mutatis mutandis (with \eqref{mullereq} replaced by the analogous inequality implied by \cite[Proposition 4.9]{CohenCunyEisnerLin}) to yield the existence of a constant $C_S=C_S(\epsilon, m)$ such that
\begin{equation} \label{strongkreissineq}
 P_{S}(N, 1+\epsilon)\ge C_S  \log^m (\log N), \hspace{0.3 cm} \forall N\ge 1.   
\end{equation}
Further, by considering direct sums as in Remark \ref{poweronly}, we may also obtain an operator $V_{m, c}$ that, for $c$ sufficiently small, has strong Kreiss constant at most $1+\epsilon$ and also
satisfies 
\[
\|V_{m, c}^n\|
\gtrsim \log^m(\log n).
\]
This improves McCarthy's growth rate of $\log^{1/2}(\log n)$  \cite[Example 2]{McCarthy71lost}. We omit the details.
\end{remark}

\section{Proof of Theorem \ref{genKreiss2}} 
\label{genKreiss2sec}

\begin{proof}[Proof of Theorem \ref{genKreiss2}]
First we observe that for any v-type curve 
$\ga(\theta)=r(\theta)e^{-i\theta}$, one has 
\begin{equation}\label{limsup=0}
	\limsup\limits_{\theta \to 0^\pm}\cfrac{|\theta|}{r(\theta)-1}=0. 
\end{equation}
Indeed, if it were not true, say, for 
$\limsup_{\theta \to 0^+}$, then there would exist $\tau$ such that 
$\theta/(r(\theta)-1)>\tau>0$ on a sequence of positive angles $\theta_n$ such that 
$\theta_n\to 0$. Since, by property (a) of Definition \ref{v-type-def}, $r(\cdot)$ increases on $[0,\de_+]$, this implies 
that 
$1/(r(\theta)-1)>\tau/\theta_n$ for 
$\theta\in [\theta_n/2,\theta_n]$, $n=1,2\dots$, which contradicts the finiteness of the integral from property (c). 

%
%
%

Now, by Paulsen's theorem \cite{Paulsensimilaritythm}, it suffices to show that $\overline{\DD}$ is a complete $K$-spectral set for $T$ for some $K>0.$ We will only show $K$-spectrality; the proof generalizes to the complete case by tensoring everything appropriately.  Alternatively, we can apply a result by  deLaubenfels \cite{deLauben}, which says that a Kreiss operator with finite peripheral spectrum is polynomially bounded if and only if it is completely polynomially bounded. (The same result for Ritt operators had been previously proved by Le Merdy \cite[Theorem 5.1]{LMsimprob}). 
\par

%
%
%
Continue the function $r(\cdot)$ to $[-\pi,\pi]$ by adding constant segments to its graph, so that it increases on $[0,\pi]$, decreases on $[-\pi,0]$ and satisfies $r(-\pi)=r(\pi)=r(\delta_+)=r(-\delta_-)$. Without loss of generality, we may assume that $\varepsilon$ is defined on the interval $(0, \nu]$, where
$1+\nu=r(-\pi)=r(\pi)$ (note $\nu>0$). 
%
%
Extend $\gamma(\theta)=r(\theta)e^{-i\theta}$ 
to all $\theta\in [-\pi,\pi]$. Then, $\gamma$ is a Jordan curve that contains $\DD$ in its interior and intersects $\T$ at $\{1\}$. Also, let $\{C_n\}_{n\ge 1}$ be a family of circles centered at the origin with radii $\{r_n\}_{n\ge 1}$ and such that 
\begin{equation} \label{r1}
    r_1<1+\nu-\nu/(1+\varepsilon(\nu))
\end{equation} 
and $r _n\searrow 1$. 
Further, for any $n\ge 1$, let $0<a_n<\pi$ and $-\pi<b_n<0$ denote the angles with
\begin{equation} \label{anbn}
r(a_n)-\frac{r(a_n)-1}{1+\varepsilon(r(a_n)-1)}
=r(b_n)-\frac{r(b_n)-1}{1+\varepsilon(r(b_n)-1)}
=r_n.
\end{equation}
The existence of these angles is guaranteed by the conditions $r(\theta)\to 1$ as $\theta\to 0$ and 
\[
r_n<r_1<1+\nu-\frac{\nu}{1+\varepsilon(\nu)}. 
\] 
Moreover, we assume that $a_n$ is the largest solution of \eqref{anbn} on $[0,\pi]$ and that 
$b_n$ is the smallest solution of \eqref{anbn} on $[-\pi,0].$ 
%
%
%
Our goal is to find a constant $K>0$ (not depending on $n$) so that 
\[\|p(T)\|\le K\max_{z\in C_n}|p(z)|, \hspace{0.6 cm} \forall n\ge 1,\hspace{0.1 cm} p\in \mathbb{C}[x].\]
Letting $n\to\infty$ will then yield the desired result. \par 

Now, let 
%
%
%
\[
\mu(\sigma_n(\theta), T)=\frac{1}{2\pi i}\bigg(\frac{\sigma'_n(\theta)}{\sigma_n(\theta)-T} -\frac{\overline{\sigma'_n(\theta)}}{\overline{\sigma_n(\theta)}-T^{\ast}} \bigg) 
\]
%
%
denote the double-layer potential operator, where $\sigma_n(\theta)=r_ne^{-i\theta} $ and $\sigma'_n(\theta)=-ir_ne^{-i\theta}$, with $\theta\in (-\pi, \pi]$.  The main ingredient of the proof will be the following assertion. We use $A\succeq 0$ to denote that an operator $A$ is positive semi-definite.


%
%
%
%
%
%

\begin{lemma}\label{lem:Pn-Dn}
	There exist piecewise continuous functions 
	$R_n:[-\pi,\pi]\to (0,+\infty)$
	and $\mathcal{B}(H)$-valued functions 
	$P_n(\tht), \fD_n(\tht)$ such that 
	
	\begin{itemize}
		\item[(a)]
		$\displaystyle 
		\mu(\sigma_n(\theta), T)+\frac{r_n}{2\pi}\, R_n(\tht)^{-1} =P_n(\tht)+\fD_n(\tht)$ 
		($n\in \N, \; \tht\in [-\pi,\pi])$; 

%
%

		\item[(b)]
		$P_n(\tht)\succeq  0$ for all $n$ and $\tht$; 
		
		\item[(c)]
		$\limsup_{n\to\infty}\int_{-\pi}^\pi R_n(\tht)^{-1}\is d\tht<+\infty$; 
		
		\item[(d)]
		$\limsup_{n\to\infty}\int_{-\pi}^\pi \|\fD_n(\tht)\|\is d\theta<+\infty$. 
		
	\end{itemize}
\end{lemma}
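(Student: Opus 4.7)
The plan is to decompose $[-\pi,\pi]$ into three regions based on the relative position of the circle $C_n$ and the v-type curve $\ga$, and in each region to define $R_n$, $P_n$, $\fD_n$ so that the required properties (a)--(d) can be verified piecewise.

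First I would rewrite the double-layer potential in the clean form
\[
\mu(\sigma_n(\theta),T)=-\frac{r_n}{\pi}\,\mathrm{Re}\Big(\frac{1}{r_n-Te^{i\theta}}\Big),
\]
which immediately gives the elementary positivity
\[
\mu(\sigma_n(\theta),T)+\frac{r_n}{\pi}\,\|(\sigma_n(\theta)-T)^{-1}\|\,I\succeq 0,
\]
since $-\mathrm{Re}(A)+\|A\|I\succeq 0$ for every bounded $A$. Thus setting $R_n^{-1}(\theta)=2\|(\sigma_n(\theta)-T)^{-1}\|$ makes $P_n:=\mu+\tfrac{r_n}{2\pi}R_n^{-1}$ automatically $\succeq 0$ with $\fD_n=0$, and the whole lemma reduces to choosing $R_n$ region-by-region and verifying the $L^1$-estimates (c),(d).

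Next I would introduce the three regions. Let $\theta_n^\pm$ be the angles where $r(\theta)=r_n$, so that on the \emph{core} $[\theta_n^-,\theta_n^+]$ the point $\sigma_n(\theta)$ lies in the set determined by $\ga$. There the hypothesis \eqref{eps-resolv-estim} applies directly at $\la=\sigma_n(\theta)$ and controls $\|(\sigma_n(\theta)-T)^{-1}\|$ in terms of $(r_n-1)^{-1}$. On the \emph{transition} intervals $[\theta_n^+,a_n]\cup[b_n,\theta_n^-]$ the point $\sigma_n(\theta)$ is just outside the set but well within the Neumann disk centred at $r(\theta)e^{-i\theta}\in\ga$ (this is essentially the geometric meaning of the definition \eqref{anbn} of $a_n,b_n$), so
\[
\|(\sigma_n(\theta)-T)^{-1}\|\le\frac{\|(r(\theta)e^{-i\theta}-T)^{-1}\|}{1-(r(\theta)-r_n)\|(r(\theta)e^{-i\theta}-T)^{-1}\|},
\]
which can be bounded using \eqref{eps-resolv-estim} at $\la=r(\theta)e^{-i\theta}$. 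In both cases $R_n^{-1}(\theta):=2\|(\sigma_n(\theta)-T)^{-1}\|$, $P_n:=\mu+\tfrac{r_n}{2\pi}R_n^{-1}$, $\fD_n:=0$. Finally, on the \emph{outer} region $[-\pi,b_n]\cup[a_n,\pi]$, I would exploit the fact that $\sigma(T)$ meets $\T$ only at $1$ to bound $\|\mu(\sigma_n(\theta),T)\|$ uniformly (by expanding $(\sigma_n-T)^{-1}$ as a Neumann series from $e^{-i\theta}\in\T$, whose convergence is guaranteed once $n$ is large by \eqref{limsup=0}), and simply set $R_n^{-1}\equiv 0$, $P_n=0$, $\fD_n=\mu$; its $L^1$-norm is then controlled by $\int\|(e^{-i\theta}-T)^{-1}\|\,d\theta$, which is finite for the outer region because $\theta$ stays away from $0$ there.

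The bulk of the work will be verifying (c), which requires
\[
\limsup_{n\to\infty}\int R_n^{-1}(\theta)\,d\theta=\limsup_{n\to\infty}\int 2\|(\sigma_n(\theta)-T)^{-1}\|\,d\theta<+\infty
\]
over the core and transition intervals. The strategy here is to express $\|(\sigma_n-T)^{-1}\|$ through $\|(e^{-i\theta}-T)^{-1}\|$ and the factor $\eps(r(\theta)-1)/(r(\theta)-1)$ by combining a Neumann expansion from $e^{-i\theta}$ with the local resolvent estimate, and then to match the resulting integrand with the weighted $L^2$-integral appearing in the hypothesis of Theorem \ref{genKreiss2}; one application of Cauchy--Schwarz should convert the $L^2$-hypothesis into the required $L^1$-estimate, the $\eps$-factor playing the role of the geometric-series gain from the Neumann expansion.

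The main obstacle, I expect, is precisely this last matching: one must split the integrand carefully so that the "Neumann loss" $1/(1-(r_n-1)\|(e^{-i\theta}-T)^{-1}\|)$ is absorbed by the function $\eps$, and this has to be done uniformly in $n$. The properties (a) and (b) of a v-type curve together with \eqref{limsup=0} give the geometric decay of the transition region $[\theta_n^+,a_n]$ at the right speed, and the monotonicity of $\eps$ allows one to replace $\eps(r_n-1)$ by $\eps(r(\theta)-1)$ on the core. Once these reductions are executed, (c) follows from the hypothesis and (d) from the outer-region estimate above.
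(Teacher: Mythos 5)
Your decomposition into core/transition/outer regions is in the same spirit as the paper's split into $[b_n,a_n]$ and $I_{1n}=[-\pi,b_n]\cup[a_n,\pi]$, and your rewriting $\mu(\sigma_n(\theta),T)=-\tfrac{r_n}{\pi}\mathrm{Re}\big((r_n-Te^{i\theta})^{-1}\big)$ is correct. However, the elementary positivity $-\mathrm{Re}(A)+\|A\|I\succeq 0$ on which your whole scheme rests is too lossy, and this creates a genuine gap. On the outer region you set $P_n=0$ and $\fD_n=\mu$, so condition (d) would require $\limsup_n\int_{[-\pi,b_n]\cup[a_n,\pi]}\|\mu(\sigma_n(\theta),T)\|\,d\theta<\infty$; since $a_n,b_n\to 0$, this forces $\int_{-\pi}^{\pi}\|(e^{-i\theta}-T)^{-1}\|\,d\theta<\infty$ (up to the constant $1+C_1$ from the resolvent identity). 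That integrability is \emph{not} among the hypotheses of Theorem~\ref{genKreiss2}: the only control on the circle is the weighted $L^2$ bound $\int\frac{\eps(r(\theta)-1)}{r(\theta)-1}\|(e^{i\theta}-T)^{-1}\|^2\,d\theta<\infty$, and one can have, say, $\|(e^{i\theta}-T)^{-1}\|\sim\theta^{-2}$ with $\eps(r(\theta)-1)/(r(\theta)-1)\sim|\theta|^{7/2}$ satisfying the hypothesis while $\int\|(e^{i\theta}-T)^{-1}\|\,d\theta=\infty$. A similar problem kills (c) on your transition interval: the Neumann bound you propose gives $\|(\sigma_n(\theta)-T)^{-1}\|\le 1/(-G_n(\theta))$ with $G_n(\theta)=r(\theta)-r_n-\frac{r(\theta)-1}{1+\eps(r(\theta)-1)}$, and this blows up non-integrably (generically like $|\theta-a_n|^{-1}$) as $\theta\to a_n^-$; moreover convergence of the Neumann series is not even guaranteed throughout $[\theta_n^+,a_n]$, since $G_n$ need not keep a fixed sign there.

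The missing ingredient is the operator identity of Lemma~\ref{operpoisson} (the Poisson-type factorization). It shows that $\mu(\sigma_n(\theta),T)+\frac{r_n}{2\pi R}$ equals a product $R\,A^{-1}B\big(\frac{1}{R^2}-(T-C)^{-1}(T-C)^{-*}\big)B^*A^{-*}$, so positivity is governed by comparing $1/R^2$ with $\|(T-C)^{-1}\|^2$ at a point $C$ on (or near) the v-type curve, where \eqref{eps-resolv-estim} applies. The defect between the two admissible choices of $1/R^2$ is
\[
\frac{[1+\eps(r(\theta)-1)]^2}{(r(\theta)-1)^2}-\frac{1}{(r(\theta)-r_n)^2}
\le\frac{\eps(r(\theta)-1)^2+2\eps(r(\theta)-1)}{(r(\theta)-1)^2},
\]
which is exactly where the small factor $\eps(r(\theta)-1)/(r(\theta)-1)$ enters the error term $\fD_n$, making it match the integrand of the hypothesis. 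Your crude bound replaces this quadratically small defect by the full resolvent norm and has no mechanism to produce the $\eps$-gain; without it, (c) and (d) cannot be closed under the stated assumptions.
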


\noindent Roughly, the idea is that $\mu(\sigma_n(\theta), T)$ can be made positive semi-definite up to an error term $\fD_n(\tht)$ by adding a function $R_n(\theta)^{-1}$ such that both $\|\fD_n(\tht)\|$ and $R_n(\theta)^{-1}$ are ``controllable" (in this case, uniformly integrable) as $n\to\infty. $
The functions $R_n(\tht),P_n(\tht), \fD_n(\tht)$ will be given later by explicit formulas. The proof is postponed; we will first show how the lemma can be used to complete the proof of Theorem \ref{genKreiss2}.

Let $p$ be a polynomial. 
We will estimate $\|p(T)\|$. Put 
\[
\la_n(\tht):=\frac{r_n}{2\pi}\,R_n(\tht)^{-1}.
\]
Letting $\cC$ denote the Cauchy transform, we have
\[
{ \cC(\overline{p})(T)} 
	:=\frac{1}{2\pi i}\int_{0}^{2\pi}\sigma_n'(\theta)(\sigma_n(\theta)-T)^{-1} \ \overline{p(\sigma_n(\theta))}d\theta=\overline{p(0)},\]
Thus, we obtain (writing $p_n(\theta)$ for $p(\sigma_n(\theta)$))
\begin{equation}\label{p(T)}
\begin{aligned}
	p(T)&=p(T)+[\cC(\overline{p})(T)]^*-[\cC(\overline{p})(T)]^* \\
	&=\int_{-\pi}^{\pi}\mu(\sigma_n(\theta), T)p_n(\theta) d\theta                -p(0)I \\
	&=\int_{-\pi}^{\pi}\big[\mu(\sigma_n(\theta), T)+\lambda_n(\theta)\big]p_n(\theta) d\theta-\int_{-\pi}^{\pi}\lambda_n(\theta)p_n(\theta)d\theta-p(0)I \\
	&=\int_{-\pi}^{\pi}P_n(\theta)p_n(\theta) d\theta+\int_{-\pi}^{\pi}\fD_n(\theta)p_n(\theta) d\theta-\int_{-\pi}^{\pi}\lambda_n(\theta)p_n(\theta)d\theta-p(0)I.
\end{aligned}
\end{equation}

Since $P_n(\theta)\succeq  0$, we have  
\begin{equation}\label{C-Schw}
\Big\|
\int_{-\pi}^{\pi}P_n(\theta)p_n(\theta) d\theta
\,\Big\|\le 
\sup_{z\in C_n}|p(z)|\cdot
\Big\|
\int_{-\pi}^{\pi}P_n(\theta) d\theta\,
\Big\|\; .
\end{equation}
Indeed, it suffices to consider the case when 
$|p|\le 1$ on $C_n$. Let 
$u,v$ be arbitrary unit vectors. 
Then by applying twice the  
Cauchy-Schwarz inequality we get that 
\begin{multline*}
\Big|
\Big\langle \Big(\int_{-\pi}^{\pi}P_n(\theta)p_n(\theta)d\theta\,\Big) u, v  \Big\rangle
\Big|             
\le 
\int_{-\pi}^{\pi}
\big| \langle P_n(\theta)u, v \rangle \big| 
\, d\tht    \\
 \le 
\int_{-\pi}^{\pi}
\big\langle
P_n^{1/2}(\tht)u, P_n^{1/2}(\tht)u
\big\rangle 
\, 
\big\langle
P_n^{1/2}(\tht)v, P_n^{1/2}(\tht)v
\big\rangle 
\; d\tht     
\le 
\Big\|
\int_{-\pi}^{\pi}P_n(\theta) d\theta\,
\Big\|\; , 
\end{multline*}
which gives \eqref{C-Schw}. 
Hence, crashing through with norms in \eqref{p(T)}, we obtain 
\begin{align*}
	\|p(T)\|&\le \bigg(\bigg\|\int_{-\pi}^{\pi}P_n(\theta)d\theta\bigg\|+\int_{-\pi}^{\pi}\|\fD_n(\theta)\|d\theta+\int_{-\pi}^{\pi}\lambda_n(\theta)d\theta+1\bigg)
	\sup_{z\in C_n}|p(z)|\\
	&\le \bigg(\bigg\|\int_{-\pi}^{\pi}\mu(\sigma_n(\theta), T)d\theta\bigg\|+2\int_{-\pi}^{\pi}\|\fD_n(\theta)\|d\theta+2\int_{-\pi}^{\pi} \lambda_n(\theta) d\theta+1\bigg)
	\sup_{z\in C_n}|p(z)| \\
	&= \bigg(2+2\int_{-\pi}^{\pi}\|\fD_n(\theta)\|d\theta+2\int_{-\pi}^{\pi}\lambda_n(\theta)d\theta+1\bigg)
	\sup_{z\in C_n}|p(z)| \\
	&= \bigg(3+2\int_{-\pi}^{\pi}\|\fD_n(\theta)\|d\theta+2\int_{-\pi}^{\pi}\lambda_n(\theta) d\theta\bigg)
	\sup_{z\in C_n}|p(z)| 
\end{align*} 
for every $n\ge 1.$ Thus, 
\begin{align*}
	\|p(T)\|&\le \limsup\limits_{n\to\infty}\bigg(3+2\int_{-\pi}^{\pi}\|\fD_n(\theta)\|d\theta+2\int_{-\pi}^{\pi}|\lambda_n(\theta)|d\theta\bigg)\sup_{z\in\mathbb{T}}|p(z)|\\
	&:=K\sup_{z\in\mathbb{T}}|p(z)|,
\end{align*}
where $K$ is finite because of Lemma ~\ref{lem:Pn-Dn}. 
This completes the proof of Theorem~\ref{genKreiss2} (modulo 
Lemma~\ref{lem:Pn-Dn}). 
\end{proof}

To prove Lemma~\ref{lem:Pn-Dn}, 
we require the following well-known Poisson-type  factorization. The proof is omitted (see e.g.  \cite[Lemma 7]{Crouzeix-Greenbaum}). 
\begin{lemma} \label{operpoisson}
Assume $v, v', C\in\mathbb{C}$ and $R>0$ are such that $v-C=iRv'$ and $|v'|=1$. Then, for any $T\in\mathcal{B}(H)$ with 
$v, C\notin\si(T)$, 
%
%
we have
\[\frac{1}{i}\big(v'(v-T)^{-1}-\overline{v'}\big(\overline{v}-T^*)^{-1}\big)+\frac{1}{R}I\]
\[=R(v-T)^{-1}(T-C)\Big(\frac{1}{R^2}-(T-C)^{-1}(T-C)^{-\ast}\Big)(T-C)^{\ast}(v-T)^{-\ast}.\]
\end{lemma}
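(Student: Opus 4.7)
The plan is to verify the identity by an algebraic manipulation that reduces it, after clearing all inverses and exploiting the commutativity of any two polynomials in $T$, to the scalar fact $|v-C|^2=R^2$, which is precisely the hypothesis $v-C=iRv'$ together with $|v'|=1$. Throughout, I will write $S:=v-T$ and $u:=T-C$, so that $S+u=v-C=iRv'$ and $S$ commutes with $u$.

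I would first rewrite the left-hand side. Using $1/i=-i$ and $v'=-i(v-C)/R$ (which follows immediately from $v-C=iRv'$), a short computation gives
\[
\frac{1}{i}\bigl(v'S^{-1}-\overline{v'}(S^\ast)^{-1}\bigr)+\frac{1}{R}I
=\frac{1}{R}\Bigl[I-(v-C)S^{-1}-\overline{v-C}\,(S^\ast)^{-1}\Bigr].
\]
Substituting $v-C=S+u$ and $\overline{v-C}=S^\ast+u^\ast$ peels off an identity from each of the last two summands, so the left-hand side becomes $\tfrac{1}{R}\bigl[-I-uS^{-1}-u^\ast(S^\ast)^{-1}\bigr]$. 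A parallel expansion of the right-hand side, using $(T-C)(T-C)^{-1}=I=(T-C)^{-\ast}(T-C)^\ast$, turns it into $\tfrac{1}{R}S^{-1}uu^\ast(S^\ast)^{-1}-R\,S^{-1}(S^\ast)^{-1}$.

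The finishing move is to multiply the identity through by $R$, then by $S$ on the left and by $S^\ast$ on the right. Because $Su=uS$ one has $SuS^{-1}=u$, and trivially $(S^\ast)^{-1}S^\ast=I$, so the left-hand side collapses to $-SS^\ast-uS^\ast-Su^\ast$ and the right-hand side collapses to $uu^\ast-R^2 I$. Rearranging, the identity becomes
\[
R^2 I=SS^\ast+uS^\ast+Su^\ast+uu^\ast=(S+u)(S+u)^\ast,
\]
and the right-hand side of this in turn equals $(iRv')\overline{(iRv')}\,I=R^2|v'|^2 I=R^2 I$, completing the verification.

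The only real subtlety is bookkeeping with commutations: $S$ commutes with $u$ (both are polynomials in $T$) but not with $u^\ast$ or $S^\ast$ in general. Fortunately, the only operations required above are $SuS^{-1}=u$ and the trivial cancellations of $S^{-1}S$ or $(S^\ast)^{-1}S^\ast$, so no illegal sliding past $T^\ast$ is ever attempted. The hypothesis $v,C\notin\sigma(T)$ guarantees that every inverse appearing in the computation is well defined.
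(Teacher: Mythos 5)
Your verification is correct. The paper itself omits the proof of this lemma, referring instead to Lemma 7 of Crouzeix--Greenbaum, so there is no in-paper argument to compare against; your self-contained algebraic check is exactly the kind of direct verification the cited source performs. I traced the computation: with $S=v-T$, $u=T-C$, the left side does reduce to $\tfrac1R\bigl[-I-uS^{-1}-u^\ast(S^\ast)^{-1}\bigr]$, the right side to $\tfrac1R S^{-1}uu^\ast(S^\ast)^{-1}-RS^{-1}(S^\ast)^{-1}$, and conjugating by $S$ on the left and $S^\ast$ on the right (using only $SuS^{-1}=u$ and trivial cancellations, never sliding anything past an adjoint) turns the identity into $R^2I=(S+u)(S+u)^\ast=|v-C|^2I$, which is the hypothesis. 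The only inverses used are $S^{-1}$, $(S^\ast)^{-1}$, $u^{-1}$, $(u^\ast)^{-1}$, all guaranteed by $v,C\notin\sigma(T)$. The argument is complete.
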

%
%
%
%
%
%
%

\begin{proof}[Proof of Lemma \ref{lem:Pn-Dn}]
For each $n$, we divide $[-\pi,\pi]$ into two sets:
\[
[-\pi,\pi]=I_{1n}\cup I_{2n}, \quad \text{where }
    I_{1n}=[-\pi,b_n]\cup [a_n,\pi], \; I_{2n}=[b_n,a_n]. 
\] 
The expressions for $R_n(\tht), P_n(\tht), \fD_n(\tht)$ will depend on whether 
$\tht$ belongs to $I_{1n}$, to $[b_n,0]$ or to $[0,a_n]$. 

\medskip 
{\bf Case 1:} $\tht\in I_{1n}=[-\pi, b_n]\cup [a_n, \pi]$. 
Applying Lemma \ref{operpoisson} with $v=r_n e^{-i\theta}, v'=-ie^{-i\theta}, C=r(\tht)e^{-i\theta}, R=r(\theta)-r_n$, we obtain (recall that $\sigma_n(\theta)=r_ne^{-i\theta}$ and $\sigma'_n(\theta)=-ir_ne^{-i\theta}$)
\begin{equation}\label{1stint}
\begin{aligned}
{ \mu}
&
{ (\sigma_n(\theta), T)+\frac 1{2\pi}\,\frac{r_n}{r(\theta)-r_n}
}                                     \\
=&\frac 1{2\pi}\,r_n(r(\theta)-r_n)A_1^{-1}B_1\Big(\frac{1}{(r(\theta)-r_n)^2}-\big(T-r(\theta)e^{-i\theta}\big)^{-1}\big(T-r(\theta)e^{-i\theta}\big)^{-\ast}\Big)B_1^*A_1^{-\ast},  
\end{aligned}
\end{equation}
where $A_1=r_ne^{-i\theta}-T, B_1=T-r(\theta)e^{-i\theta}$. 
Set $R_n(\tht)=r(\tht)-r_n$ for 
$\tht\in I_{1n}=[-\pi, b_n]\cup [a_n, \pi]$. 
Since, by assumption, we have
\[\big(T-r(\tht)e^{-i\theta}\big)^{-1}\big(T-r(\tht)e^{-i\theta}\big)^{-\ast}\le \frac{[1+\varepsilon(r(\tht)-1)]^2}{(r(\tht)-1)^2},\]
we rewrite~\eqref{1stint} as 
\begin{align*}
&
{ \mu(\sigma_n(\theta), T)+ \frac{r_n}{2\pi}\,R_n(\tht)^{-1}
}     \\
=&\ \frac{r_n}{2\pi}\,(r(\theta)-r_n)A_1^{-1}B_1\bigg(\frac{\big[1+\varepsilon(r(\theta)-1)\big]^2}{(r(\theta)-1)^2}-\big(T-r(\theta)e^{-i\theta}\big)^{-1}\big(T-r(\theta)e^{-i\theta}\big)^{-\ast}\bigg)B_1^*A_1^{-\ast}+\fD_n(\theta) \\
=&P_n(\theta)+\fD_n(\theta),
\end{align*}
where $P_n(\theta)\succeq 0$  on $I_{1n}$ and 
\begin{equation} \label{dif2}
\fD_n(\theta)=\frac{r_n}{2\pi}\,(r(\theta)-r_n)A_1^{-1}B_1\bigg(\frac{1}{(r(\theta)-r_n)^2}-\frac{\big[1+\varepsilon(r(\theta)-1)\big]^2}{(r(\theta)-1)^2}\bigg)B_1^*A_1^{-\ast}.    
\end{equation}

\medskip 
{\bf Case 2:}   $\theta\in [b_n, 0]$. By
Lemma~\ref{operpoisson} with $v=r_n e^{-i\theta}, v'=-ie^{-i\theta}, C=r(b_n)e^{-i\theta}, R=r(b_n)-r_n$, 
\begin{align*}
&{\mu(\sigma_n(\theta), T)+
\frac{r_n}{2\pi}\,R_n(\tht)^{-1}
}  \\
=&\frac{r_n}{2\pi}\,(r(b_n)-r_n)A_2^{-1}B_2\bigg(\frac{1}{(r(b_n)-r_n)^2}-\big(T-r(b_n)e^{-i\theta}\big)^{-1}\big(T-r(b_n)e^{-i\theta}\big)^{-\ast}\bigg)B_2^*A_2^{-\ast}\\ 
=&P_n(\theta),
\end{align*}
where 
$R_n(\tht)=r(b_n)-r_n$, 
$A_2=r_ne^{-i\theta}-T, B_2=T-r(b_n)e^{-i\theta}$ and $P_n(\theta)\succeq 0$ because, by ~\eqref{anbn},
\[
\frac{1}{(r(b_n)-r_n)^2}=\frac{\big[1+\varepsilon(r(b_n)-1)\big]^2}{(r(b_n)-1)^2}
\]
and, for $\theta\in [b_n, 0]$, $r(b_n)e^{-i\tht}$ belongs to the domain defined by the v-like curve $\ga$ (due to the monotonicity of $r$). We put  $\fD_n(\tht)=0$ for these values of $\tht$. 

\medskip 
{\bf Case 3:}  
 $\theta\in [0, a_n]$. As in the previous case, it can be shown that 
\[
P_n(\tht):=\mu(\sigma_n(\theta), T)+ 
\frac {r_n}{2\pi}\, R_n(\tht)^{-1} \succeq 0, 
\]
{
where $R_n(\tht)=r(a_n)-r_n$ for these values of $\tht$. We again put $\fD_n(\tht)=0$.
} \par


%
%
%
{
Notice that for $n\ge 1,$  $R_n: [-\pi, \pi]\to (0,+\infty)$ has been defined by} 
\[
R_n(\theta)=
\begin{cases}
	r(\theta)-r_n,  
	    &\theta\in [-\pi, b_n]\cup[a_n, \pi] \\
	r(b_n)-r_n,  
	  &\theta\in [b_n, 0]\\
	r(a_n)-r_n,  
	    &\theta\in [0, a_n]\, .
\end{cases} 
\]
By our previous calculations,  both assertions (a) and (b) of Lemma~\ref{lem:Pn-Dn} hold. To verify assertion (c), we need some bounds for $\limsup_{n\to\infty}\int_{-\pi}^{\pi} R_n(\theta)^{-1}\ d\theta$. We may use \eqref{anbn} to write
%
%
\begin{align}
\int_{-\pi}^{b_n} R_n(\theta)^{-1} \, d\theta &=\int_{-\pi}^{b_n}\frac{1}{r(\theta)-r_n}\ d\theta	 \notag \\	
&=\int_{-\pi}^{b_n}\frac{1}{r(\theta)-1}\ d\theta+\int_{-\pi}^{b_n}\frac{r_n-1}{(r(\theta)-1)(r(\theta)-r_n)}\ d\theta  \notag \\  
&\le \int_{-\pi}^{b_n}\frac{1}{r(\theta)-1}\ d\theta    +\int_{-\pi}^{b_n}\frac{r_n-1}{(r(\theta)-1)(r(b_n)-r_n)}\ d\theta \notag \\ 
&= \int_{-\pi}^{b_n}\frac{1}{r(\theta)-1}\ d\theta    +\int_{-\pi}^{b_n}\frac{(r_n-1)[1+\varepsilon(r(b_n)-1)]}{(r(b_n)-1)(r(\theta)-1)}\ d\theta \notag \\ 
&\le \int_{-\pi}^{b_n}\frac{1}{r(\theta)-1}\ d\theta    +\int_{-\pi}^{b_n}\frac{1+\varepsilon(r(b_n)-1)}{r(\theta)-1}\ d\theta \notag,
\end{align}
with an identical estimate for $\int_{a_n}^{\pi}R_n^{-1}$. We also have
\[\int_{b_n}^{0} R_n(\theta)^{-1}\ d\theta=\int_{b_n}^{0} \frac{1+\varepsilon(r(b_n)-1)}{r(b_n)-1} d\theta, \]
and similarly for $\int_0^{a_n}R_n^{-1}.$ Thus,
\begin{multline} \label{ellnest}
\limsup_{n\to\infty}\int_{-\pi}^{\pi} R_n(\theta)^{-1}  d\theta=\limsup_{n\to\infty}
\bigg(\int_{I_{1n}}R_n(\theta)^{-1} d\theta+ 
  \int_{I_{2n}}R_n(\theta)^{-1} d\theta\bigg)  \\ 
\le  \int_{-\pi}^{\pi}\cfrac{2}{ r(\theta)-1}\; d\theta+2\limsup\limits_{\theta\to 0}\;\frac{|\theta|}{r(\theta)-1}\, , 
\end{multline}
which is finite because of our assumptions on $\gamma$ (using also ~\eqref{limsup=0}). 
\par 
%
It remains to verify assertion (d) of Lemma~\ref{lem:Pn-Dn}; for this, we have to estimate
 $\int_{I_{1n}}\|\mathfrak{D}_n(\theta)\| \ d\theta$. For $ n\ge 1,$ consider the function
 \[G_n(\theta):=r(\theta)-r_n-\cfrac{r(\theta)-1}{1+\epsilon(r(\theta)-1)},\]
 for $\theta\in(-\pi, \pi].$ From \eqref{r1}, we have $G_n(\pi)\ge G_1(\pi)> 0.$ Also, the definition of $a_n, b_n$ implies that they are, respectively, the largest and smallest solutions  of $G_n(\theta)=0$ in $[-\pi, \pi]$. Thus, we have $G_n(\theta)\ge 0$ for all $\theta \in I_{1n}$, which yields
%
%
%
%
\begin{align*}
0&\le 
\frac{\big[1+\varepsilon(r(\theta)-1)\big]^2}{(r(\theta)-1)^2} 
-
\frac{1}{(r(\theta)-r_n)^2}            \\
& \le  
\frac{\big[1+\varepsilon(r(\theta)-1)\big]^2}{(r(\theta)-1)^2} 
-
\frac{1}{(r(\theta)-1)^2}             \\
& \le 
\frac{\varepsilon(r(\theta)-1)^2+2\varepsilon(r(\theta)-1)}{(r(\theta)-1)^2}\; .
\end{align*}
By \eqref{dif2}, we may now write
\begin{align}
    &\limsup\limits_{n\to\infty}\int_{I_{1n}}\|\mathfrak{D}_n(\theta)\| \ d\theta \notag  \\
    \le & \limsup\limits_{n\to\infty}\int_{I_{1n}}\frac{r_n(r(\theta)-r_n)\big[\varepsilon(r(\theta)-1)^2+2\varepsilon(r(\theta)-1)\big]}{(r(\theta)-1)^2} \|(r_ne^{-i\theta}-T)^{-1}\|^2\|T-r(\theta)e^{-i\theta}\|^2 \ d\theta \notag \\
    \le &\ C\limsup\limits_{n\to\infty}  \int_{I_{1n}}r_n\cfrac{\varepsilon(r(\theta)-1)}{r(\theta)-1}\,\|(r_ne^{-i\theta}-T)^{-1}\|^2\ d\theta, \label{wes}   
\end{align}
where $C$ is a constant that only depends on $\|T\|$ and on $\max_x |\varepsilon(x)|$ and $\max_{\theta}r(\theta)$. Now, observe that, by the resolvent identity and the fact that $T$ satisfies the Kreiss condition,
\begin{align*}
 \|(r_ne^{-i\theta}-T)^{-1}-(e^{-i\theta}-T)^{-1}\|&=\|e^{-i\theta}(r_n-1)(r_ne^{-i\theta}-T)^{-1}(e^{-i\theta}-T)^{-1}\|    \\
\ & \le C_1\|(e^{-i\theta}-T)^{-1}\|,
\end{align*}
where $C_1$ is the Kreiss constant of $T.$ 
Thus, we may write 
\begin{align*}
&\|(r_ne^{-i\theta}-T)^{-1}\|^2\le \Big[
\big\|(r_ne^{-i\theta}-T)^{-1}-(e^{-i\theta}-T)^{-1}\big\|+
\big\|(e^{-i\theta}-T)^{-1}\big\|\Big]^2 \\ 
&\le \  C_1^2\|(e^{-i\theta}-T)^{-1}\|^2+2C_1\|(e^{-i\theta}-T)^{-1}\|^2+\|(e^{-i\theta}-T)^{-1}\|^2,
\end{align*}
so \eqref{wes} yields 
\begin{equation}\label{D2est}
\limsup\limits_{n\to\infty} \int_{I_{1n}}\|\fD_n(\theta)\| \ d\theta 
	\le\  C_2 \int_{-\pi}^{\pi}  \cfrac{\varepsilon(r(\theta)-1)}{r(\theta)-1}\,\|(e^{-i\theta}-T)^{-1}\|^2\ d\theta<\infty, 
\end{equation}
where $C_2$ is   a constant that depends on $C_1$, $\|T\|$, $\max_x |\varepsilon(x)|$ and $\max_{\theta}r(\theta)$. 
\end{proof}

\section{Counterexamples} \label{counterexsection}
\subsection{\eqref{eps-resolv-estim} does not imply power-boundedness}

\begin{theorem}\label{resolvcounter}
    There exists $T\in\mathcal{B}(H)$ and a continuous $\eps:(0, \nu)\to[0,\infty)$ with $\eps(0^+)=0$ such that \eqref{eps-resolv-estim} holds and $T$ is not power bounded.
\end{theorem}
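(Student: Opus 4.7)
My plan is to construct $T$ as a direct sum $T = \bigoplus_{k\ge 1} X_{1,N_k}(c_k)$ of the finite-dimensional matrices from Section~\ref{mainproof} (with $m=1$), choosing parameters so that two complementary bounds on each summand's resolvent force the Kreiss function of $T$ to approach $1$ as $|\lambda|\to 1^+$.

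I will rely on the following properties of $X_{1,N}(c)$ established in Section~\ref{mainproof}: it is a weighted backward shift on $\mathbb{C}^{2N}$ satisfying $X_{1,N}(c)^N = 0$; the power bound $\|X_{1,N}(c)^j\|\le 1+c\log(j+1)$ for $0\le j<N$ (extracted from the decomposition in Lemma~\ref{fuller}); the growth lower bound $\|X_{1,N}(c)^{N-1}\|\ge C'c\log N$ from~\eqref{penult}; and the uniform Kreiss bound $\|(\lambda-X_{1,N}(c))^{-1}\|\le (1+C_1 c)/(|\lambda|-1)$ for $|\lambda|>1$, uniformly in $N$, from the proof of Theorem~\ref{uniformthm}. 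Setting $c_k := 1/k$, $N_k := \lceil e^{k^2}\rceil$ and writing $t := |\lambda|-1$, non-power-boundedness is immediate:
\[
\|T^{N_k-1}\|\ge\|X_{1,N_k}(c_k)^{N_k-1}\|\ge C'c_k\log N_k = C'k \longrightarrow\infty.
\]

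For the refined resolvent estimate, I combine two bounds on each summand. The uniform Kreiss estimate gives $\|(\lambda - X_{1,N_k}(c_k))^{-1}\|\le (1+C_1/k)/t$. On the other hand, since $X_{1,N_k}(c_k)^{N_k}=0$, the Neumann representation yields, for $|\lambda|\ge 1$,
\[
\|(\lambda - X_{1,N_k}(c_k))^{-1}\|\le \sum_{j=0}^{N_k-1}\frac{\|X_{1,N_k}(c_k)^j\|}{|\lambda|^{j+1}} \le N_k(1+c_k\log N_k) \le C'' kN_k.
\]
Multiplying by $t$, the Kreiss function of each summand satisfies $t\|(\lambda - X_{1,N_k}(c_k))^{-1}\|\le \min\bigl\{1+C_1/k,\, C'' tkN_k\bigr\}$. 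The first bound decreases in $k$ while the second increases; they cross at $k^*=k^*(t)$ with $C'' tk^*N_{k^*}\asymp 1$, which for $N_k=e^{k^2}$ gives $(k^*)^2+\log k^*\asymp \log(1/t)$, hence $k^*\asymp\sqrt{\log(1/t)}$. Taking the supremum over $k$,
\[
\|(\lambda - T)^{-1}\| = \sup_{k\ge 1}\|(\lambda - X_{1,N_k}(c_k))^{-1}\|\le \frac{1+\eps(t)}{t},\qquad \eps(t) = O\bigl(1/\sqrt{\log(1/t)}\bigr),
\]
which vanishes as $t\to 0^+$ and yields \eqref{eps-resolv-estim}.

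The main obstacle lies in calibrating the sequences $\{c_k\}$ and $\{N_k\}$: non-power-boundedness demands $c_k\log N_k\to\infty$, while the decay $\eps(t)\to 0$ requires the crossover $k^*(t)\to\infty$ as $t\to 0$, i.e.\ $N_k$ may not grow too fast relative to $1/c_k$. The choice $c_k=1/k$, $N_k=\lceil e^{k^2}\rceil$ simultaneously satisfies both constraints and produces the rate $\eps(t)\asymp 1/\sqrt{\log(1/t)}$; other admissible parameter choices give alternative (still vanishing) rates. Extending this discrete bound to a genuine continuous positive function $\eps$ on $(0,\nu)$ is then immediate from the closed-form expression.
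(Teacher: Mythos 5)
Your construction is correct and is essentially the paper's own: the authors likewise take a direct sum of the truncated operators $X_{1,N}(c_N)$ with $c_N\to 0$ and $c_N\log N\to\infty$, get non-power-boundedness from \eqref{penult}, and bound the resolvent of each block by combining the perturbative Kreiss estimate of Lemma~\ref{anotherprelimyes} with the nilpotency $S_N^N=0$, arriving at $\|(z-X_{1,N}(c_N))^{-1}\|\le(1+c_NL-|z|^{-N})/(|z|-1)$ and defining $\eps(x)=\sup_N\{c_NL-(1+x)^{-N}\}$. The only differences are cosmetic: the paper keeps one block for every $N$ and proves $\eps(0^+)=0$ by a subsequence argument, whereas your sparse choice $c_k=1/k$, $N_k=\lceil e^{k^2}\rceil$ with the two-bound crossover yields the same conclusion together with an explicit rate $\eps(t)\asymp(\log(1/t))^{-1/2}$.
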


\begin{proof}
Given $c>0$, \eqref{decompult} and Lemma \ref{anotherprelimyes} tell us that there exists a constant $L>0$ such that, for any $|z|>1,$
\[\big\| (zI-T_{1, c})^{-1}- (zI-S)^{-1}  \big\|\le \cfrac{cL}{|z|-1}.\]
Let $N\ge 2$ and recall 
$X_{m, N}(c)=P_NT_{m, c}\big|_{V_n}=T_{m, c}\big|_{V_n}$. Conjugating $(zI-T_{1, c})^{-1}- (zI-S)^{-1}$ by the projection $P_N$ yields
\begin{equation} \label{truncX}
    \Big\| (zI_{V_N}-X_{1, N}(c))^{-1}- (zI_{V_N}-P_NS\big|_{V_N})^{-1}  \Big\|\le \cfrac{cL}{|z|-1}. 
\end{equation}
Since the operator $S_N:=P_NS\big|_{V_N}$ satisfies $S_N^N=0$, we may use \eqref{truncX} to obtain
\begin{align}
 \big\| (zP_N-X_{1, N}(c))^{-1}\big\|&\le |z|^{-1}\bigg\|\sum_{j=0}^{N-1}\cfrac{S^j_N}{z^j}\bigg\| + \cfrac{cL}{|z|-1} \notag \\
&\le  \cfrac{1+cL-|z|^{-N}}{|z|-1},         \label{truncX2}
\end{align}
 for all $|z|>1.$
Now, let $c_N>0$ be any decreasing null sequence such that $c_N\log N\to \infty. $ Set $Y_{N}:=X_{1, N}(c_N)$, choose $N_0\ge 2$ such that $c_N L<1$ for $N\ge N_0$ and define 
\[Y:=\text{diag}\{Y_{ N}\}_{N\ge N_0}. \]
Observe that \eqref{penult} gives us 
\[\|Y^{N-1}\|\ge \|Y^{N-1}_{N}\|\ge c_N\log N,\]
and thus $Y$ is not power-bounded. 
Further, define $\eps:(0, +\infty)\to [-1,1]$ by 
\[
	\eps(x):=\sup_{N\ge N_0}\{c_NL-(1+x)^{-N}\}. 
\]
%
%
Clearly, $\eps$ is an increasing function. Let us show that $\eps(0^+)\le 0$. Let $\{x_k\}$ be an arbitrary strictly decreasing sequence tending to $0.$ Choose a sequence $\{N_k\}$ such that $|c_{N_k}L-(1+x_k)^{-{N_k}}-\eps(x_k)|\to 0$ 
as $k\to\infty.$ If $\{N_k\}$ is bounded, then
\[
\limsup_k \big\{c_{N_k}L-(1+x_k)^{-{N_k}}\big\}\le 1-1=0. \]
On the other hand, if $\{N_k\}$ goes to infinity (after passing to a subsequence if necessary), then we obtain 
\begin{align*}
    \limsup_k \big\{c_{N_k}L-(1+x_k)^{-{N_k}}\big\}&=\lim_k(c_{N_k}L)-\liminf_k\{(1+x_k)^{-N_k}\} \\ &=-\liminf_k\{(1+x_k)^{-N_k}\} \\ 
    &\le 0.
\end{align*}
Thus, $\eps(0^+)\le0$. Now, \eqref{truncX2} yields, for $|z|>1$, that 
\begin{align*}
 \|(zI-Y)^{-1}\|&=\sup_{N\ge N_0}\|(zI-Y_N)^{-1}\|   \\ 
   &\le\sup_{N\ge N_0}\bigg\{ \cfrac{1+c_NL-|z|^{-N}}{|z|-1}\bigg\}                        \\ 
   &=\cfrac{1+\eps(|z|-1)}{|z|-1}. 
\end{align*}
In particular, the spectrum of $Y$ is contained in 
$\{|z|\le 1\}$. 
Since $Y$ is not power bounded, its spectral radius 
is equal to $1$. 
Hence, $\eps(x)\ge 0$ for any $x>0$. It follows that 
$\eps(0^+)=0$. It is now easy to find a continuous increasing piecewise linear function $\eps_1\ge \eps$, defined on an interval $(0,\delta]$, such that 
one also has $\eps_1(0^+)=0$ (put $\eps_1(x_{k+1})=\eps(x_k)$ for $k\ge 1$
and extend $\eps_1$ linearly to each 
interval $[x_{k+1}, x_k]$, $k\ge2$). Replacing $\eps$ by $\eps_1$ completes the proof.
\end{proof}

\subsection{A Pisier-type counterexample}

To prove Theorem~\ref{thm-Pisier-counterex},
we will adapt a variant of Pisier's counterexample, given in \cite[Chapter 10]{Paulsenbook}. Namely, denote by $S$ the forward shift on $\ell^2$: 
$S(x_0,x_1,\dots)=(0,x_0,x_1,\dots)$. 
Next, let $K$ be a Hilbert space 
and denote by $S_K$ the forward shift on $\ell^2(K)$: 
$S_K(k_0,k_1,\dots)=(0,k_0,k_1,\dots)$, whose adjoint 
acts as $S^*_K(k_0,k_1,\dots)=(k_1,k_2,\dots)$. 
We will take as $F$ the following Foguel-Hankel 
operator
\begin{equation} \label{eq:Foguel-Hankel}
F=\begin{bmatrix}
	S^*_K & X \\ 
	0 & S_K
\end{bmatrix}. 
\end{equation}
Here $X=(a_{i+j}W_{i+j})$, where $W_n\in \mathcal{B}(K)$ are chosen as in \cite[page 142]{Paulsenbook} and $a_n$ are complex numbers, whose choice will be specified later. In particular, $\{W_n\}$ satisfy the CAR relations modulo the ideal of compact operators. 
We set $X_n=nXS_K^{n-1}$ to be the $(1,2)$-entry of $F^n$. We summarize the facts which we will use as follows. 

\begin{lemma}[\cite{Paulsenbook}]
	\label{lem:Paulsenbook}
	\begin{itemize}
	 \item[(i)] 
	 If $\sum_{k=0}^\infty (k+1)^2 |a_k|^2$ is infinite, then $F$ is not similar to a contraction; 
	 
	 \item[(ii)] One has 
	 $
	 \|X_n\|^2\le n^2\sum_{k=0}^\infty |a_{k+n-1}|^2 \le  2\|X_n\|^2. 
	 $
	\end{itemize}	
\end{lemma}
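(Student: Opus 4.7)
The plan is to derive both assertions of Lemma \ref{lem:Paulsenbook} from the CAR-like anticommutation relations that $\{W_n\}$ satisfy modulo compact operators, essentially reproducing the argument of \cite[Ch.~10]{Paulsenbook}.

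For part (ii), I would proceed by direct calculation. Because $X = (a_{i+j}W_{i+j})_{i,j \ge 0}$ is a block Hankel operator on $\ell^2(K)$ and $S_K^{n-1}$ is the $(n{-}1)$-fold forward shift, the operator $Y := XS_K^{n-1}$ acts as $(Y\xi)_i = \sum_{k\ge 0} a_{i+k+n-1}W_{i+k+n-1}\xi_k$. The two products $Y^*Y$ and $YY^*$ then have block entries expressible as sums of $W_m^*W_\ell$ and $W_m W_\ell^*$ for $m,\ell\ge n-1$. The CAR identities $W_m^*W_\ell + W_\ell W_m^* \equiv \delta_{m\ell} I \pmod{\mathcal{K}}$ cause the off-diagonal terms to cancel modulo compacts and the diagonals to combine into scalars, yielding
\[
Y^*Y + YY^* \,\equiv\, \Big(\sum_{k\ge 0}|a_{k+n-1}|^2\Big)\,I \pmod{\mathcal{K}}.
\]
Since $Y^*Y,\,YY^*\succeq 0$ and $\|Y^*Y\| = \|YY^*\| = \|Y\|^2$, positivity delivers the upper bound $\|Y\|^2 \le \sum_{k\ge 0}|a_{k+n-1}|^2$, while pairing the identity against a maximizing unit vector yields the matching lower bound $2\|Y\|^2 \ge \sum_{k\ge 0}|a_{k+n-1}|^2$. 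Multiplying both sides by $n^2$ and using $\|X_n\|=n\|Y\|$ completes (ii).

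For part (i), I would follow Pisier's route. Assume for contradiction that $F$ is similar to a contraction. By Paulsen's similarity theorem, $F$ is then completely polynomially bounded with some constant $M$. The block-triangular structure of $F$ forces the $(1,2)$-blocks of polynomials in $F$ -- namely the operators $X_k$ -- to give rise to a bounded cb-Schur multiplier attached to the Hankel symbol $(a_{i+j}W_{i+j})$. Using the CAR structure of $\{W_n\}$ (which converts operator-valued cb-norms into scalar $\ell^2$ sums via the same identity that underlies (ii)), this cb-bound is equivalent to the weighted summability $\sum_{k\ge 0}(k+1)^2|a_k|^2 < \infty$, contradicting the hypothesis.

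The main obstacle will be making precise, in part (i), the passage from complete polynomial boundedness of $F$ to the summability $\sum_{k}(k+1)^2|a_k|^2<\infty$. This is the quantitative heart of Pisier's counterexample and rests on combining Paulsen's cb-similarity criterion with a Schur-multiplier calculation informed by the CAR relations; in a full write-up I would reproduce this argument essentially verbatim from \cite[Ch.~10]{Paulsenbook}. Part (ii), by contrast, is a bookkeeping exercise once the block formula for $Y=XS_K^{n-1}$ and the CAR identities are in place.
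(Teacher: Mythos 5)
Your part (i) is handled the same way the paper handles it---by deferring to \cite[Theorem 10.5]{Paulsenbook}---so there is nothing to object to there. The genuine gap is in part (ii). The identity on which your whole computation rests, $Y^*Y+YY^*\equiv\big(\sum_{k\ge0}|a_{k+n-1}|^2\big)I \pmod{\mathcal K}$, is false. Writing $Y_{i,j}=a_{i+j+n-1}W_{i+j+n-1}$, the $(j,l)$ block of $Y^*Y+YY^*$ collects terms $\bar a_p a_q\,W_p^*W_q+a_p\bar a_q\,W_pW_q^*$ with $p=i+j+n-1$, $q=i+l+n-1$; for $j\neq l$ (so $p\neq q$) the CAR relations pair $W_p^*W_q$ with $W_qW_p^*$, not with $W_pW_q^*$, and these off-diagonal blocks do not cancel (for fermionic annihilation operators $W_1^*W_2+W_1W_2^*\neq0$). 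Even the diagonal blocks sum to $\sum_{k\ge j+n-1}|a_k|^2\, I$, which depends on $j$, not to a single scalar. There is a second, independent obstruction: since the relations among the $W_n$ hold only modulo the compacts, an identity of this kind could at best control the \emph{essential} norm of $X_n$; a compact perturbation can push the true norm above the essential norm, so no argument of this shape can deliver the upper bound $\|X_n\|^2\le n^2\sum_k|a_{k+n-1}|^2$ --- which is precisely the inequality used in the proof of Theorem~\ref{thm-Pisier-counterex}.

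For comparison with the paper: the right-hand inequality (the one with the factor $2$) is obtained by testing $X_n$ and $X_n^*$ on vectors supported in the zeroth block coordinate and using only the diagonal relation $\|W_p\xi\|^2+\|W_p^*\xi\|^2=\|\xi\|^2$; this is where the $2$ comes from, and your ``maximizing vector'' step can be repaired along these lines (with a weak-null sequence if the relation is only essential). The left-hand inequality is proved by a different mechanism altogether: one factors $X_n=\Phi(\widetilde X_n)$ through the contraction $\Phi\big((a_{i,j})\big)=\sum_i a_{i,0}W_i$ --- which uses the exact norm inequality $\big\|\sum_i\alpha_iW_i\big\|\le\|\alpha\|_{\ell^2}$, not a mod-compact relation --- and computes $\|\widetilde X_n\|^2=n^2\sum_{k\ge n-1}|a_k|^2$ exactly for the scalar model $\widetilde X=(a_{i+j}E_{i+j,0})$. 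You need a substitute of this kind for the upper bound; the proposed operator identity cannot be fixed to yield it.
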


\begin{proof} Assertion (i) is contained in 
	\cite[Theorem 10.5]{Paulsenbook}. 
	
	The right-hand inequality in (ii) follows from a calculation in the proof of this theorem (\cite{Paulsenbook}, page 143). Notice the typo in item (iii) in \cite[Theorem 10.5]{Paulsenbook}; it should read as $\sup_n n^2\sum_{k=n-1}^\infty |a_k|^2<\infty$. 
	
	To get the left-hand inequality in (ii), let $\{E_{i,j}\}_{i,j=0}^\infty$ be the usual matrix units, considered as linear operators on (scalar) $\ell^2$. 
	Set $\wt X=(a_{i+j} E_{i+j,0})$. Following 
	the proof of Theorem 10.5 in \cite{Paulsenbook}, define 
	$\Phi:B(\ell^2)\to B(K)$ by 
	$\Phi((a_{i,j}))=\sum_{i=0}^\infty a_{i,0} W_i$. 
	Then $\|\Phi\|\le 1$. 
	Put $\wt X_n=n\wt XS^{n-1}$. Then 
	$\Phi(\wt X)=X$ and $\Phi(\wt X_n)=X_n$, so that we get 
	\[
	\|X_n\|^2=
	\|\Phi(\wt X_n)\|^2\le 
	\|\wt X_n\|^2 = n^2\sum_{k=n-1}^\infty |a_k|^2. \qedhere 
	\]
\end{proof}

\begin{proof}[Proof of Theorem~\ref{thm-Pisier-counterex}]
Since $\{n^{-2}\be_n\}$ decreases, we can take a sequence $\{a_n\}$ such that 
\[
|a_n|^2=\frac{\be_{n+1}}{(n+1)^2}-\frac{\be_{n+2}}{(n+2)^2}
\]
for all $n\ge 0$. 
Then 
$\sum_{n=0}^\infty (n+1)^2|a_n|^2=
\be_1+\sum_{n=0}^\infty\big(1-(\frac{n+1}{n+2})^2\big)\be_{n+2}
=\infty$. 
Hence, by Lemma~\ref{lem:Paulsenbook}, 
the corresponding Foguel-Hankel operator $F$ is not similar to a contraction. 
On the other hand, by the same Lemma,  
\[
\|X_n\|^2\le n^2\sum_{k=n-1}^\infty |a_k|^2 
=n^2\sum_{k=n-1}^\infty \Big(\frac{\be_{k+1}}{(k+1)^2}-\frac{\be_{k+2}}{(k+2)^2}\Big)
= \be_{n}.
\]
Since $\|F^n\|\le 1+\|X_n\|$, the proof is complete. 
\end{proof}

\begin{remark} \label{epsconversion}
Now, assume $F$ satisfies the hypotheses of Theorem \ref{thm-Pisier-counterex} with $\beta_n=1/\log(n+1)$. Given $0<a<1,$ set $k=\lfloor(1-a)^{-1}\rfloor$. Observe that
\begin{align}
\sum^{\infty}_{n=2}\cfrac{a^n}{\sqrt{\log n}}&=\sum_{n=2}^k\cfrac{a^n}{\sqrt{\log n}}+\sum_{n=k+1}^{\infty}\cfrac{a^n}{\sqrt{\log n}} \notag \\
&\le \sum_{n=2}^k\cfrac{1}{\sqrt{\log n}}+\cfrac{1}{\sqrt{\log k}}\cdot \cfrac{a^{k+1}}{1-a}  \notag \\
&\lesssim \int^k_2\cfrac{1}{\sqrt{\log a}}\ da +
\cfrac{|\log(1-a)|^{-1/2}}{1-a}.
\notag
\end{align}
Since $\int^k_2\frac{1}{\sqrt{\log a}}\ da \approx \frac{k}{\sqrt{\log k}}$, the previous inequality yields
\begin{align}
\sum^{\infty}_{n=2}\cfrac{a^n}{\sqrt{\log n}}& \lesssim  
\frac{k}{\sqrt{\log k}}+ \cfrac{|\log(1-a)|^{-1/2}}{1-a} \notag \\
& \lesssim \cfrac{|\log(1-a)|^{-1/2}}{1-a}.\label{aux}
\end{align}
Thus, there exists a numerical constant $C$ such that, for $1<|\lambda|<1+\nu$, 
\begin{align*}
\|(\lambda-F)^{-1}\|&\le \sum_{n=0}^{\infty}\cfrac{\|F\|^{n}}{|\lambda|^{n+1}} \\ 
&\le \cfrac{1}{|\lambda|-1}+\sum_{n=1}\cfrac{|\lambda|^{-n-1}}{\sqrt{\log(n+1)}} \\ 
&\le \cfrac{1}{|\lambda|-1}+C\,\cfrac{\big|\log (|\lambda|-1)\big|^{-1/2}}{|\lambda|-1},
\end{align*}
where the last inequality is due to \eqref{aux}, with $a$ replaced by $1/|\lambda|$. In other words, $F$ satisfies \eqref{eps-resolv-estim} with 
$\eps(x)=C\big(\log (1/x)\big)^{-1/2}$.
\end{remark}

\section{Open Questions} \label{questions}
Nikolski conjectured \cite{Nikolski} that one should always be able to obtain sublinear growth in $N$ in \eqref{spijk} as long as the Kreiss constant 
remains bounded. In fact, he proved that 
if $A$ is a diagonalizable $N\times N$ matrix 
with unimodular eigenvalues, then there exists $\epsilon>0$ (depending on the basis constant of the eigenvectors of $A$) such that 
\[\sup_{n\ge 1}\|A^n\|\le 2\pi K N^{1-\epsilon}.\]
Unfortunately, it is not clear how exactly $\epsilon$ depends on $K$ here (see also \cite[Theorem 2.9.9]{Nikolski}), while the case of general $N\times N$ matrices currently lies out of reach of Nikolski's methods. As such,  there remains a sizeable gap between the lower bounds \eqref{nik}-\eqref{main} and Spijker's estimate \eqref{spijk}, which naturally motivates the following questions:

\begin{question}\label{q1}
Does there exist a strictly increasing function $\alpha(\cdot): (1, \infty)\to (0, 1)$ with $\lim_{K\to 1+}\alpha(K)=0$ and $\lim_{K\to \infty }\alpha(K)=1$ such that 
\[P(N, K)\gtrsim_K N^{\alpha(K)},\]
for all $N\ge 1$ and $K>1$?
\end{question}
\begin{question}\label{q2}
  Does there exist a constant $\delta>0$ such that \[P(N, K)\gtrsim_K N^{\delta}, \]
  for all $N\ge 1$ and $K>1$?
\end{question}
\noindent  Inequality \eqref{main} asserts that the answer to Question \ref{q2} is yes if the growth function $N^{\delta}$ is replaced by $\log^m (N),$  independently of the value of $m\ge 1.$ \par
Regarding condition \eqref{eps-resolv-estim} on the other hand, the following questions remain unanswered.
\begin{question}\label{q3}
  Does there exist a continuous function $\eps: (0, \nu]\to (0, \infty)$ such that \eqref{eps-resolv-estim} guarantees power-boundedness? What if $\sigma(T)\cap\mathbb{T}=\{1\}?$
\end{question}

\begin{question}\label{q4}
Does there exist a decreasing positive sequence $\{b_k\}$, tending to zero, such that $\|T^k\|\le 1+b_k$ for all $k$ implies the similarity of $T$ to a contraction?
\end{question}

\printbibliography

\end{document}